\newcommand{\cD}{{\mathcal D}}
\newcommand{\cH}{{\mathcal H}}
\newcommand{\cO}{{\mathcal O}}
\newcommand{\cV}{{\mathcal V}}
\newcommand{\cW}{{\mathcal W}}
\newcommand{\bT}{{\mathbb{T}}}
\newcommand{\bG}{{\mathbb G}}
\newcommand{\sbm}[1]{\left[\begin{smallmatrix} #1
		\end{smallmatrix}\right]}
\newcommand{\bD}{{\mathbb D}}
\newcommand{\bC}{{\mathbb C}}
\newcommand{\bB}{{\mathbb B}}
\newcommand{\bS}{{\mathbb S}}
\newtheorem{thm}{Theorem}[section]
\newtheorem{lemma}[thm]{Lemma}
\newtheorem{proposition}[thm]{Proposition}
\theoremstyle{definition}
\newtheorem{definition}[thm]{Definition}
\newtheorem{remark}[thm]{Remark}
\newtheorem{example}[thm]{Example}
\numberwithin{equation}{section}
\def\textmatrix#1&#2\\#3&#4\\{\bigl({#1 \atop #3}\ {#2 \atop #4}\bigr)}
\def\dispmatrix#1&#2\\#3&#4\\{\left({#1 \atop #3}\ {#2 \atop #4}\right)}
\numberwithin{equation}{section}
\def\textmatrix#1&#2\\#3&#4\\{\bigl({#1 \atop #3}\ {#2 \atop #4}\bigr)}
\def\dispmatrix#1&#2\\#3&#4\\{\left({#1 \atop #3}\ {#2 \atop #4}\right)}
\begin{document}
\title[Determining sets]{Determining sets for holomorphic functions on the symmetrized bidisk}
\author[Das]{B. Krishna Das}
\address[Das]{Department of Mathematics, Indian Institute of Technology Bombay, Powai, Mumbai, 400076, India}
\email{dasb@math.iitb.ac.in, bata436@gmail.com}

\author[Kumar]{P. Kumar}
\address[Kumar]{Department of Mathematics, Indian Institute of Science, Bengaluru-560012, India}
\email{poornendukumar@gmail.com}

\author[Sau]{H. Sau}
\address[Sau]{Indian Institute of Science Education and Research, Dr. Homi Bhabha Road, Pashan, Pune, Maharashtra 411008, India.}
\email{haripadasau215@gmail.com}
\thanks{MSC 2020: Primary: 32A10 Secondary: 32A08, 32A70, 32C25, 32E30, 46E22}

\begin{abstract}
	A subset $\cD$ of a domain $\Omega\subset\bC^d$ is determining for an analytic function $f:\Omega\to\overline{\bD}$ if whenever an analytic function $g:\Omega\rightarrow\overline{\bD}$ coincides with $f$ on $\cD$, equals to $f$ on whole $\Omega$. This note finds several sufficient conditions for a subset of the symmetrized bidisk to be determining. For any $N\geq1$, a set consisting of $N^2-N+1$ many points is constructed which is determining for any rational inner function with a degree constraint. We also investigate when the intersection of the symmetrized bidisk intersected with some special algebraic varieties can be determining for rational inner functions.
\end{abstract}

\maketitle

\section{Introduction}
\subsection{Motivation}
For a domain $\Omega$ in $\bC^d$ ($d\geq 1$), let $\bS(\Omega)$ denote the set of analytic functions $f:\Omega\to\overline{\bD}$, where $\bD$ denotes the open unit disk in $\bC$. Given a function $f\in \bS(\Omega)$, this paper revolves around the question when a given subset $\cD$ of $\Omega$ has the property that whenever $g\in\bS(\Omega)$ coincides with $f$ on $\cD$, equals to $f$ on whole $\Omega$. When a subset has this property we call it a {\em determining set} for $(f,\Omega)$, or just $f$ when the domain is clear from the context. For example, $\{0,1/2\}$ is a determining set for the identity map (by the Schwarz Lemma); any open subset of $\Omega$ is determining for any analytic function on $\Omega$ (by the Identity Theorem). See Rudin \cite[Chapter 5]{Rudin} for some interesting results related to a similar concept for $\Omega=\bD^d$.

The motivation behind the study of determining sets comes from the Pick interpolation problem. It corresponds to the case when $\cD$ is a finite set. Given a finite subset $\cD=\{\lambda_1,\lambda_2,\dots,\lambda_N\}$ of $\Omega$ and points $w_1,w_2,\dots,w_N$ in the open unit disk $\bD$, the Pick interpolation problem asks if there is an analytic function $f:\Omega\to\bD$ such that $f(\lambda_j)=w_j$ for $j=1,2,\dots,N$. Therefore in this case, $\cD$ being a determining set for $(f,\Omega)$ means that the (solvable) Pick problem $\lambda_j\mapsto f(\lambda_j)$ has a unique solution. In view of Pick's pioneering work \cite{PICK}, it is therefore clear that when $\Omega=\bD$, then $\cD$ is determining for $f$ if and only if the Pick matrix 
$$
\begin{bmatrix}
\frac{1-f(\lambda_i)\overline{f(\lambda_j)}}{1-\lambda_i\overline{\lambda_j}}
\end{bmatrix}_{i,j=1}^N
$$has rank less than $N$, which is further equivalent to the existence of a Blaschke function of degree less than $N$ solving the data. The classical Pick interpolation problem has seen a wide range of generalizations. To mention a few, it is known when a given Pick data is solvable when $\Omega$ is the polydisk $\bD^d$ \cite{AM-Poly}, the Euclidian ball $\bB_d$ \cite{KZ-Tran}, the symmetrized bidisk \cite{SYM_Real, Tirtha-Hari-JFA}, an affine variety \cite{JK-JFA} and in more general setting of test functions \cite{DM, DMM}. However, unlike the classical case, it is rather obscure in higher dimension when it comes to understanding when a given solvable Pick problem has a unique solution, and usually one has to settle with either necessary or sufficient conditions; see for example \cite{AM_NYJM, DS-CAOT, DS-1, DS-2}.

\subsection{The main results}
The purpose of this article is to explore this direction where the underlined domain is the {\em symmetrized bidisk}
\begin{align}\label{symmetrized bidisk}
\mathbb{G}:=\{(z_1+z_2, z_1 z_2): (z_1, z_2)\in\mathbb{D}^2\}.
\end{align}Following the work \cite{AYSym} of Agler and Young, this domain has remained a field of extensive research in operator theory and complex geometry constituting examples and counter-examples to celebrated problems in these areas such as the rational dilation problem \cite{AY-2003,BSS} and the Lempert Theorem \cite{Costara}. In quest of understanding the determining sets we shall actually consider the following more general situation.

\begin{definition}\label{Strong-Pick-Set}
Let $\Omega\subset\bC^d$ be a domain, $E\subset\Omega$ and $f\in\bS(\Omega)$. We say that a subset $\cD$ of $E$ is {\em determining} for $(f, E)$ if for every $g\in\bS(\Omega)$, $g=f$ on $\cD$ implies $g=f$ on $E$. If $\cD$ is determining for $(f,E)$ for all $f\in\bS(\Omega)$, then we say that $\cD$ is determining for $E$. Moreover, when $E$ is the largest set in $\Omega$ such that $\cD$ is determining for $(f,E)$, we say that $E$ is the {\em uniqueness set} for $(f,\cD)$, i.e., in this case
$$
E=\bigcap\{Z(g-f): g\in\bS(\Omega)\mbox{ and } g=f \mbox{ on }\cD\}.
$$Here, for a function $f$, we use the standard notation $Z(f)$ for the zero set of $f$.
\end{definition}
Note that if $E$ is the uniqueness set for $(f,\cD)$, then for every $z\in \Omega\setminus E$, there exists a function $g\in \bS(\Omega)$ such that $g=f$ on $\cD$ but $f(z)\neq g(z)$. Remarkably, when $\cD$ is a finite subset of $\bG$, then for any function $f\in\bS(\bG)$, the uniqueness set for $(f,\cD)$ is an affine variety (see \cite{DKS}, \cite{AM}). This is owing to the fact that every solvable Pick data in $\bG$ always has a rational inner solution (see \cite{DKS}, \cite{AM-Bidisc}). Also note that if $f$ and $g$ agree on $\cD$, then $\cD$ is determining for $(f,E)$ if and only if $\cD$ is determining for $(g,E)$ also. In view of these facts, we shall mostly be concerned with the case when the function $f$ in Definition \ref{Strong-Pick-Set} is rational and inner. Here, a function $f$ in $\bS(\bG)$ is called {\em inner}, if $\lim_{r\to 1-}|f(r\zeta_1+r\zeta_2,r^2\zeta_1\zeta_2)|=1$ for almost all $\zeta_1,\zeta_2$ in $\bT$.

Note that $\bG$ is the image of $\bD^2$ under the (proper) holomorphic map $\pi:(z_1,z_2)\mapsto (z_1+z_2,z_1z_2)$. The topological boundary of $\bG$ is $\partial\bG:=\pi(\overline{\bD}\times \bT)\cup\pi(\bT\ \times\bD)$ and the distinguished boundary of $\bG$ is $b\bG:=\pi(\bT\times \bT)$ (see \cite{SYM_GEO}). Here the {\em distinguished boundary} of a bounded domain $\Omega \subset\bC^d$ is the $\check{\mbox S}$ilov boundary with respect to the algebra of complex-valued functions continuous on $\overline{\Omega}$ and holomorphic in $\Omega$. A special type of algebraic varieties has been prevalent in the study of uniqueness of the solutions of a Pick interpolation problem (see \cite{AM, DKS, K, KZ_JGA, KZ-Tran, KM}). We define it below.
\begin{definition}\label{D:DistVar}
An algebraic variety $Z(\xi)$ in $\bC^2$ is said to be {\em distinguished} with respect to a bounded domain $\Omega$, if
$$Z(\xi)\cap\Omega\neq \emptyset \quad\text{ and }\quad Z(\xi)\cap\partial \Omega=Z(\xi)\cap b\Omega.$$
\end{definition}
An example of a distinguished variety with respect to $\bG$ is $\{(2z,z^2):z\in\bC\}$. We refer the readers to the papers \cite{AM, BKS-APDE, DKS, DAS-SARKAR, Pal-Shalit} for results concerning these varieties and their connection to interpolation problems. 


We now state the main results of this paper in the order they are proved.
\begin{enumerate}
	\item In \S \ref{SS:GenDom} we reformulate the notion of determining set in the more general setting of reproducing kernel Hilbert spaces and find a sufficient condition for a finite subset of a general domain to be determining. This is Theorem \ref{T:Unique-Sl}. We also show by an example that the sufficient condition need not be necessary, in general.
	
	\item Starting with a natural number $N$, \S \ref{SS:FiniteSet} constructs a finite subset of $\bG$ consisting exactly of $N^2-N+1$ many points which is determining for any rational inner function with a natural degree constraint on it. This is Theorem \ref{Main-Result2}. Proposition \ref{LE-1} is an intermediate step of the construction and is interesting by its own.
	
\item Given a distinguished variety $\cW=Z(\xi)$, we investigate in \S \ref{SS:DistVar} when  the intersection $\cW\cap\mathbb{G}$ can be the uniqueness set for $(f,\cD)$, where $f$ is a rational inner function and $\cD$ a finite subset of $\bG$ -- see Theorem \ref{MainThm}. The preparatory results Propositions \ref{Main_2} and \ref{Main3} are interesting in their own rights. Propositions \ref{Main_2} states that if $f$ is a rational inner function with some regularity assumption, then there is a natural number $N$ depending on $f$ large enough so that {\em any} subset of $\cW\cap\bG$ consisting of $N$ points is determining for $(f,\cW\cap\bG)$. This section then goes on to find (in Theorem \ref{Main4}) a sufficient condition for $\cW\cap\bG$ to be determining for a rational inner function $f$ with a regularity assumption on it. The condition is just that the inequality
\begin{align*}
\quad 2\operatorname{Re}\langle f, \xi h\rangle_{H^2}< \|\xi h\|^2_2
\end{align*} holds, whenever $h$ is a non-zero analytic function on $\bG$ and $\xi h$ is bounded on $\bG$. Here the inner product is the Hardy space inner product, briefly discussed in \S \ref{SS:DistVar}.

\item \S \ref{S:BddExt} proves a bounded extension theorem for distinguished varieties with no singularities on $b\bG$. More precisely, given a distinguished variety $\cW$, we show that corresponding to every two-variable polynomial $f$, there is a rational function $F$ on $\bG$ such that $F|_{\cW\cap\bG}=f$ and that $\sup_\bG|F(s,p)|\leq \alpha \sup_{\cW\cap\bG}|f|$, for some constant $\alpha$ depending only on the distinguished variety $\cW$.
\end{enumerate} 

\section{Determining and the uniqueness sets}
\subsection{A result for a general domain}\label{SS:GenDom}
We begin by proving a sufficient condition for a finite subset of a general domain to be determining. The concept of determining set can be formulated in a general setup of reproducing kernel Hilbert spaces. Here a {\em kernel} on a domain $\Omega$ in $\bC^d$ ($d\geq 1$) is a function $k:\Omega\times\Omega\to\bC$ such that for every choice of points $\lambda_1,\lambda_2,\dots,\lambda_N$ in $\Omega$, the $N\times N$ matrix $[k(\lambda_i,\lambda_j)]$ is positive definite. Given a kernel $k$, there is a unique Hilbert space $H(k)$ associated to it, called the reproducing kernel Hilbert space; we refer the uninitiated reader to the book \cite{Paulsen}. For the purpose of this paper, all that is needed to know is that elements of the form $\{\sum_{j=1}^nc_jk(\cdot,\lambda_j):c_j\in\bC\mbox{ and }\lambda_j\in\Omega\}$ constitute a dense set of $H(k)$. A kernel $k$ is said to be a holomorphic kernel, if it is holomorphic in the first and conjugate holomorphic in the second variable. Note that when $k$ is holomorphic, then so are the elements of $H(k)$. Let us denote by $\operatorname{Mult}H(k)$ the algebra of all bounded holomorphic functions $\varphi$ on $\Omega$ such that $\varphi\cdot f \in H(k)$ whenever $f\in H(k)$. Such a holomorphic function is generally referred to as a {\em multiplier} for $H(k)$. Let $\operatorname{Mult}_1H(k)$ denote the set of all multipliers $\varphi$ such that the operator norm of $M_\varphi: f\mapsto \varphi\cdot f$ for all $f$ in $H(k)$ is no greater than one. A subset $\cD\subset\Omega$ is said to be {\em determining} for a function $\varphi$ in $\operatorname{Mult}_1H(k)$ if whenever $\psi\in\operatorname{Mult}_1H(k)$ such that $\varphi=\psi$ on $\cD$, then $\varphi=\psi$ on $\Omega$.
\begin{thm}\label{T:Unique-Sl}
Let $k$ be a holomorphic kernel on a domain $\Omega$ in $\bC^d$, $\varphi\in \operatorname{Mult}_1H(k)$ and $\cD=\{\lambda_1,\lambda_2,\dots,\lambda_N\}\subset\Omega$. If the matrix
\begin{align}\label{PickMat}
[(1-\varphi(\lambda_i)\overline{\varphi(\lambda_j)})k(\lambda_i,\lambda_j)]_{i,j=1}^N
\end{align}
is singular, then $\cD$ is determining for $\varphi$.
\end{thm}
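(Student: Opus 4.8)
The plan is to exploit the rigidity of the Pick interpolation problem in the multiplier algebra setting, working with the standard de Branges--Rovnyak / Agler--McCarthy style decomposition that the singular Pick matrix forces. Let me sketch the argument.

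The plan is to reformulate contractivity of a multiplier as positivity of an operator and then observe that the null vector of the singular matrix \eqref{PickMat} is simultaneously norm-maximizing for $M_\varphi^*$ and for $M_\psi^*$, for every competitor $\psi$; dividing out the resulting function gives $\varphi=\psi$.

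First I would recall the computation underlying \eqref{PickMat}. For $\eta\in\operatorname{Mult}_1H(k)$ one has $M_\eta^*k(\cdot,\lambda)=\overline{\eta(\lambda)}\,k(\cdot,\lambda)$, so for any $c=(c_1,\dots,c_N)\in\bC^N$ and $g:=\sum_{j=1}^Nc_jk(\cdot,\lambda_j)\in H(k)$,
\[
\big\langle (I-M_\eta M_\eta^*)g,\,g\big\rangle=\sum_{i,j=1}^N\overline{c_i}c_j\big(1-\eta(\lambda_i)\overline{\eta(\lambda_j)}\big)k(\lambda_i,\lambda_j),
\]
and since $\|M_\eta\|\le 1$ the matrix on the right is positive semidefinite; for $\eta=\varphi$ it is exactly \eqref{PickMat}. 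Now let $\psi\in\operatorname{Mult}_1H(k)$ with $\psi=\varphi$ on $\cD$; since $\psi(\lambda_j)=\varphi(\lambda_j)$ for every $j$, replacing $\varphi$ by $\psi$ in \eqref{PickMat} does not change the matrix, so it is singular for $\psi$ as well.

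Next, I would choose $0\ne c\in\bC^N$ in the kernel of \eqref{PickMat} and set $g:=\sum_jc_jk(\cdot,\lambda_j)$. Because $k$ is a kernel, $[k(\lambda_i,\lambda_j)]$ is positive definite, hence $k(\cdot,\lambda_1),\dots,k(\cdot,\lambda_N)$ are linearly independent and $g\ne0$. The displayed identity applied with $\eta=\varphi$ and with $\eta=\psi$ gives $\|M_\varphi^*g\|^2=\|g\|^2=\|M_\psi^*g\|^2$, while $M_\varphi^*g=\sum_jc_j\overline{\varphi(\lambda_j)}k(\cdot,\lambda_j)=\sum_jc_j\overline{\psi(\lambda_j)}k(\cdot,\lambda_j)=M_\psi^*g=:h$, so $\|h\|=\|g\|\ne0$. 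For a contraction $T$, the equality $\|T^*g\|=\|g\|$ forces $\langle(I-TT^*)g,g\rangle=0$, hence $TT^*g=g$ by positivity of $I-TT^*$; applying this to $T=M_\varphi$ and to $T=M_\psi$ yields $M_\varphi h=g=M_\psi h$. Interpreted as an equality of functions on $\Omega$, this says $(\varphi-\psi)h\equiv0$ on $\Omega$. Since $k$ is a holomorphic kernel, $h\in H(k)$ is holomorphic on the connected open set $\Omega$ and $h\not\equiv0$, so $Z(h)$ has empty interior; therefore $\varphi=\psi$ on the dense open set $\Omega\setminus Z(h)$, and by continuity $\varphi=\psi$ on all of $\Omega$. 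Thus $\cD$ is determining for $\varphi$.

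I do not expect a genuine analytic obstacle. The conceptual heart is that the \emph{same} null vector serves $\varphi$ and every $\psi$ matching it on $\cD$, so that $\varphi$ and $\psi$ multiply one fixed nonzero holomorphic function $h$ onto the same element $g$ of $H(k)$; once this is in place the identity theorem closes the argument. The only step requiring care is the non-vanishing of $h$ (equivalently of $g$), which is precisely where the positive definiteness of $[k(\lambda_i,\lambda_j)]$ built into the definition of a kernel is used; and one should note that the argument is one-directional, consistent with the promised example showing that singularity of \eqref{PickMat} is not necessary.
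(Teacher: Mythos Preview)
Your argument is correct and reaches the same destination as the paper's proof, but by a somewhat different route. The paper augments $\cD$ by an arbitrary point $\lambda_{N+1}\in\Omega\setminus\cD$, writes the $(N{+}1)\times(N{+}1)$ Pick matrix for the competitor $\psi$, and expands the resulting quadratic form in a single complex parameter $z$; the null vector $\gamma$ kills the $N\times N$ block, and nonnegativity for all $z$ forces the cross term to vanish, yielding the pointwise identity
\[
\psi(\lambda_{N+1})\sum_{j=1}^N\overline{\psi(\lambda_j)}\gamma_jk(\lambda_{N+1},\lambda_j)=\sum_{j=1}^N\gamma_jk(\lambda_{N+1},\lambda_j).
\]
You instead work globally in $H(k)$: from $\langle(I-M_\psi M_\psi^*)g,g\rangle=0$ and $I-M_\psi M_\psi^*\ge0$ you deduce $M_\psi M_\psi^*g=g$, i.e.\ $\psi\cdot h=g$ with $h=M_\psi^*g=M_\varphi^*g$. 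This is exactly the paper's displayed identity read as an equation of functions (their $L$ is your $g$, the bracketed sum is your $h$), so the two arguments converge at the same step and both finish with the identity theorem on $\Omega\setminus Z(h)$. Your packaging via the equality case for contractions is a little cleaner and makes the operator-theoretic rigidity transparent; the paper's $(N{+}1)$-point computation, on the other hand, produces an explicit formula for $\psi(\lambda_{N+1})$ in terms of the data. One point you handled more carefully than the paper: you explicitly justify $g\ne0$ (hence $h\ne0$) via the positive definiteness of $[k(\lambda_i,\lambda_j)]$, which is needed to invoke the identity theorem at the end.
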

\begin{proof}
Since the matrix \eqref{PickMat} is singular, there is a non-zero vector in its kernel; let us denote it by $\gamma$. Let  $\lambda_{N+1}$ be any point in $\Omega\setminus\cD$, and $\psi\in\operatorname{Mult}_1H(k)$ be any function such that $\varphi=\psi$ on $\cD$. Since $\psi\in \operatorname{Mult}_1H(k)$, the operator $M_\psi:f\mapsto \psi\cdot f$ is a contractive operator on $H(k)$ and therefore for every $z\in\bC$,
$$\langle[(1-\psi(\lambda_i)\overline{\psi(\lambda_j)})k(\lambda_i,\lambda_j)]_{i,j=1}^{N+1}\begin{bmatrix}
\gamma \\
z
\end{bmatrix}, \begin{bmatrix}
\gamma \\
z
\end{bmatrix}\rangle\geq{0}.$$
Since $\gamma\in\operatorname{Ker}[(1-\varphi(\lambda_i)\overline{\varphi(\lambda_j)})k(\lambda_i,\lambda_j)]$ and $\varphi=\psi$ on $\cD$, the above inequality collapses to 
$$2\operatorname{Re}[\overline{z}\sum_{j=1}^N(1-\overline{\psi(\lambda_j)}\psi(\lambda_{N+1}))\gamma_jk(\lambda_{N+1},\lambda_j)]+ |z|^2(1-|\psi(\lambda_{N+1})|^2)||k_{\lambda_{N+1}}||^2\geq{0}.$$
Since the above inequality is true for all $z\in\mathbb{C}$, we have
$$\sum_{j=1}^N(1-\overline{\psi(\lambda_j)}\psi(\lambda_{N+1}))\gamma_jk_{N+1,j}=0.$$
Therefore, we have an implicit formula for $\psi(\lambda_{N+1})$:
\begin{align}\label{formula}
\psi(\lambda_{N+1})\left(\sum_{j=1}^N\overline{\psi(\lambda_j)}\gamma_jk(\lambda_{N+1}, \lambda_j\right)=\sum_{j=1}^N\gamma_jk(\lambda_{N+1},\lambda_j).
\end{align}
Define for $z$ in $\Omega$,
$$L(z)=\sum_{j=1}^N\gamma_jk_{\lambda_j}(z)=\sum_{j=1}^N\gamma_jk(z,\lambda_j).$$ By definition, it is clear that $L\in H(k)$. Consider the open set $\cO=\Omega\setminus Z(L)$. Note that if $\lambda_{N+1}\in\cO$, then the right hand side of equation \eqref{formula} does not vanish, and therefore $\psi(\lambda_{N+1})$ is uniquely determined. 

Now suppose $\phi=\psi$ on $\cO$. By the assumption that $\cO$ is a set of uniqueness for $\operatorname{Mult}_1(H(k))$, it follows that $\phi=\psi$.
\end{proof}
The converse of the above result is not true as the simple example below demonstrates.
\begin{example} Let $k$ be the Bergman kernel on $\Omega=\mathbb{D}$, i.e., $k(z, w)= (1-z\overline{w})^{-2}$. Then it is well-known that $\operatorname{Mult}_1H(k)=\bS(\bD)$, see for example \cite[section 2.3]{AM-Book}. By the Schwarz Lemma, $\cD=\{0,1/2\}$ is determining for the identity function. However, the matrix
$\sbm{
1 & 1 \\
1 & 4/3
}$
is non-singular.  
\end{example}

The rest of the paper specializes to the symmetrized bidisk.
\subsection{Finite sets as a determining set}\label{SS:FiniteSet}
Given a natural number $N$, this subsection constructs a finite subset $\cD$ of $\bG$ consisting exactly of $N^2-N+1$ many points, which is determining for any rational inner function on $\bG$ with a degree constraint on it. This is inspired by the work of Scheinker \cite{DS-CAOT} which extends the following classical result for the unit disk to the polydisks.
\begin{lemma}[Pick \cite{PICK}]\label{Pick}
Let $\cD=\{\lambda_1, \lambda_2,\dots,\lambda_N\}\subset\bD$ and $f$ be a rational inner function on $\mathbb{D}$ with degree strictly less than $N$. Then if $g\in\bS(\mathbb{D})$ is such that $f=g$ on $\cD$, then $f=g$ on $\bD$.
\end{lemma}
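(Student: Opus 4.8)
My plan is to deduce Lemma~\ref{Pick} from Theorem~\ref{T:Unique-Sl}, applied with the Szeg\H{o} kernel on $\bD$. First I would invoke the standard fact that a rational inner function on $\bD$ is a finite Blaschke product, and write $f=B$ with $d:=\deg B$, so that $d<N$ by hypothesis. Take $k(z,w)=(1-z\overline{w})^{-1}$, so that $H(k)$ is the Hardy space $H^2$; since the multiplier algebra of $H^2$ is $H^\infty$ with multiplier norm equal to the supremum norm, and $\|g\|_\infty\le 1$ is the same as $g(\bD)\subseteq\overline{\bD}$, one has $\operatorname{Mult}_1H(k)=\bS(\bD)$. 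In particular $B\in\operatorname{Mult}_1H(k)$, and for this choice of $k$ and of the multiplier $\varphi=B$ the matrix in \eqref{PickMat} is exactly the classical Pick matrix $P=\big[(1-B(\lambda_i)\overline{B(\lambda_j)})(1-\lambda_i\overline{\lambda_j})^{-1}\big]_{i,j=1}^N$. (The open-set uniqueness invoked inside the proof of Theorem~\ref{T:Unique-Sl} is automatic here, since any nonempty open subset of $\bD$ is a uniqueness set for $H^\infty$ by the identity theorem.) Hence Theorem~\ref{T:Unique-Sl} reduces everything to showing that $P$ is singular.

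For that, the key observation is that $k_B(z,w):=(1-B(z)\overline{B(w)})(1-z\overline{w})^{-1}$ is the reproducing kernel of the model space $\cK_B:=H^2\ominus BH^2$, and that $\dim\cK_B=d$ (an orthonormal basis being provided by the Takenaka--Malmquist functions built from the zeros of $B$). Therefore, for the distinct points $\lambda_1,\dots,\lambda_N$, the matrix $P=[\langle k_B(\cdot,\lambda_j),k_B(\cdot,\lambda_i)\rangle_{\cK_B}]_{i,j=1}^N$ is the Gram matrix, computed inside the $d$-dimensional space $\cK_B$, of the $N$ reproducing-kernel vectors $k_B(\cdot,\lambda_1),\dots,k_B(\cdot,\lambda_N)$. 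Since $N>d$, these are linearly dependent, so $\operatorname{rank}P\le d<N$ and $P$ is singular. Feeding this back into Theorem~\ref{T:Unique-Sl} yields that $\cD$ is determining for $B=f$ inside $\operatorname{Mult}_1H(k)=\bS(\bD)$, which is precisely the statement of the lemma.

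I do not expect a genuine obstacle here; this is the classical Nevanlinna--Pick uniqueness phenomenon, and the only step requiring a little care is the one isolated above, namely verifying that the Pick matrix $P$ is singular via the dimension count $\dim\cK_B=d$ (together with the routine identification $\operatorname{Mult}_1H(k)=\bS(\bD)$ for the Szeg\H{o} kernel). If one preferred to avoid Theorem~\ref{T:Unique-Sl} entirely, the same conclusion follows from Pick's theorem in the form already recalled in the introduction: a solvable Pick problem whose Pick matrix has rank $r<N$ has a unique solution in $\bS(\bD)$, namely a Blaschke product of degree $r$; since $f$ solves the data $\lambda_j\mapsto f(\lambda_j)$ with $\deg f<N$, its Pick matrix must have rank $<N$, and so $f$ is forced to be that unique solution.
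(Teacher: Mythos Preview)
The paper does not actually prove Lemma~\ref{Pick}; it merely quotes it as a classical result of Pick and uses it as a black box in \S\ref{SS:FiniteSet}. Your argument is correct and supplies a self-contained justification. The route via Theorem~\ref{T:Unique-Sl} is a nice internal cross-reference: identifying $\operatorname{Mult}_1H(k)=\bS(\bD)$ for the Szeg\H{o} kernel, recognizing the Pick matrix as the Gram matrix of the kernel vectors $k_B(\cdot,\lambda_j)$ in the $d$-dimensional model space $\cK_B=H^2\ominus BH^2$, and concluding singularity from $N>d$, is exactly the right mechanism. Your parenthetical remark that the uniqueness-set hypothesis implicit in the proof of Theorem~\ref{T:Unique-Sl} is automatic here (the zero set of a nonzero $L\in H^2$ being discrete) is also on point. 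Your closing alternative---invoking Pick's theorem in the form already recalled in the Introduction---is in fact precisely how the paper itself would justify the lemma if pressed, so the two approaches are consonant.
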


For $\epsilon>0$ and $z\in\bC$, let $D(z;\epsilon):=\{w\in{\mathbb{D}}: |z-w|<{\epsilon}\}$. For $\zeta\in\bT$ and $a\in\bD$, let $m_{\zeta,a}$ be the M\"obius map 
$$m_{\zeta,a}(z)=\zeta\frac{z-a}{1-\overline{a}z}.$$
We shall have use of two notions of degree for a polynomial in two variables. The one used in this subsection is the following. For a polynomial $\xi(z,w)=\sum_{i,j}a_{i,j}z^iw^j$, we define $\deg\xi:=\max (i+j)$ such that $a_{i,j}\neq0$. The degree of a rational function in its reduced fractional representation is defined to be the degree of the numerator polynomial. The following is an intermediate step to proving Theorem \ref{Main-Result2}.
\begin{proposition}\label{LE-1}
Let $N$ be a positive integer and for each $j=1,2,\dots,N$, $\beta_j$ be distinct points in ${\mathbb{T}}$ and $D_j$ be the analytic disks $D_j=\{(z+\beta_jz, \beta_j z^2):z\in\bD\}$. Then
\begin{enumerate}
\item [(a)] there exist $\beta\in\bT$ and $\epsilon>0$ such that for every fixed $\zeta\in D(\beta;\epsilon)\cap\mathbb{T}$ and $a\in{D(0;\epsilon)}$, the analytic disk
$$
\cD_{\zeta,a}=\{(z+m_{\zeta,a}(z), zm_{\zeta, a}(z)):z\in\bD\}
$$intersects each of the analytic disks $D_j$ at a non-zero point; 
\item[(b)] for each $\zeta\in\bT$ and $\epsilon>0$, the set
$$
\cD_\zeta=\{(z+m_{\zeta,a}(z), zm_{\zeta, a}(z)):z\in\bD \text{ and } a\in{D(0;\epsilon)}\}
$$
is a determining set for any analytic functions on $\mathbb{G}$; and
\item[(c)] the set 
$$E=\{(z+\beta_jz, \beta_j z^2): z\in\bD \text{ and } j=1,2,\dots,N\}=\cup_{j=1}^ND_j$$is a determining set for any rational inner function of degree less than $N$.
\end{enumerate}
\end{proposition}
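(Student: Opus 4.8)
The plan is to establish (a), (b), (c) in order, deriving (c) from (a)+(b) by slicing along the disks $\cD_{\zeta,a}$ and invoking Pick's Lemma~\ref{Pick}. Write $\phi_{\zeta,a}(z)=(z+m_{\zeta,a}(z),\,z\,m_{\zeta,a}(z))=\pi(z,m_{\zeta,a}(z))$ for the canonical holomorphic parametrization of $\cD_{\zeta,a}$; note that $\phi_{\zeta,a}$ maps $\bT$ into $b\bG=\pi(\bT\times\bT)$, and that $D_j=\pi(\{z_2=\beta_j z_1\})$. For (a): choose $\beta\in\bT$ outside the finite set $\{\beta_1,\dots,\beta_N,\overline{\beta_1},\dots,\overline{\beta_N}\}$ and $\epsilon>0$ so that $D(\beta;\epsilon)\cap\bT$ misses it. Unsymmetrizing, a point of $\cD_{\zeta,a}\cap D_j$ comes from a $z\in\bD$ solving $m_{\zeta,a}(z)=\beta_j z$ (or $m_{\zeta,a}(z)=\overline{\beta_j}z$); clearing the denominator, $m_{\zeta,a}(z)=\beta_j z$ becomes the quadratic $\beta_j\bar a\,z^2+(\zeta-\beta_j)z-\zeta a=0$, whose two roots have product of modulus $1$, so for $a\ne0$ exactly one lies in $\bD$, and that root tends to $0$ (not to a boundary point) as $a\to0$. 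Hence for $0<|a|<\epsilon$, after shrinking $\epsilon$ uniformly over the finitely many $j$, there is a unique such root in $\bD\setminus\{0\}$, and $\phi_{\zeta,a}$ of it is the required non-zero point of $\cD_{\zeta,a}\cap D_j$.

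For (b): it suffices to show $\cD_\zeta$ contains a non-empty open subset of $\bG$ and then apply the Identity Theorem on the connected domain $\bG$. Given $(z,w)\in\bD^2$, the equation $m_{\zeta,a}(z)=w$ is, on clearing the denominator, the real-linear equation $\zeta a-wz\,\bar a=\zeta z-w$ for $a$; the real-linear map $a\mapsto\zeta a-wz\bar a$ has Jacobian determinant $1-|wz|^2>0$, so this equation has a unique solution $a(z,w)$, and $a(z_0,\zeta z_0)=0$ for every $z_0\in\bD$. Thus for $(z,w)$ near $(z_0,\zeta z_0)$ one has $|a(z,w)|<\epsilon$, so $(z,w)$ lies in $\{(z,m_{\zeta,a}(z)):z\in\bD,\ a\in D(0;\epsilon)\}$; applying the open map $\pi$ shows $\cD_\zeta$ contains an open neighbourhood of $\pi(z_0,\zeta z_0)$ in $\bG$.

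For (c): let $f$ be rational inner on $\bG$ with $\deg f<N$ and let $g\in\bS(\bG)$ agree with $f$ on $E$. Take $\beta,\epsilon$ as in (a), fix a generic $\zeta\in D(\beta;\epsilon)\cap\bT$, and for generic $a$ with $0<|a|<\epsilon$ set $F_a=f\circ\phi_{\zeta,a}$ and $G_a=g\circ\phi_{\zeta,a}$ on $\bD$. Since $\phi_{\zeta,a}$ is holomorphic into $\bG$, $F_a$ is holomorphic on $\bD$ with $|F_a|\le1$; since $\phi_{\zeta,a}(\bT)\subset b\bG$ and $f$ is rational inner, $|F_a|=1$ a.e.\ on $\bT$ for generic $(\zeta,a)$ (so that $\phi_{\zeta,a}(\bT)$ avoids the pole locus of $f$), hence $F_a$ is a finite Blaschke product; writing $f=P/Q$ in reduced form (where, $f$ being inner, $\deg Q\le\deg P=\deg f$) and substituting into $P$ and $Q$ the two rational functions $z+m_{\zeta,a}(z)$, $z\,m_{\zeta,a}(z)$, each with numerator of degree $\le2$ and denominator of degree $1$, then clearing denominators gives $\deg F_a\le2\deg f$. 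Now $G_a\in\bS(\bD)$, and $F_a=G_a$ at every point of $\cD_{\zeta,a}\cap D_j$ because $f=g$ on $D_j$; unsymmetrizing, these points are the $\bD$-solutions of $m_{\zeta,a}(z)=\beta_j z$ and of $m_{\zeta,a}(z)=\overline{\beta_j}z$, which for small $a\ne0$ — in the generic position where the $2N$ slopes $\beta_1,\overline{\beta_1},\dots,\beta_N,\overline{\beta_N}$ are distinct — contribute $2N$ distinct non-zero zeros of $F_a-G_a$ in $\bD$. Since $2N>2\deg f\ge\deg F_a$, Lemma~\ref{Pick} forces $F_a=G_a$, i.e.\ $f=g$ on $\cD_{\zeta,a}$. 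Letting $a$ range over $D(0;\epsilon)\setminus\{0\}$, the union of the $\cD_{\zeta,a}$ still contains an open subset of $\bG$ (by the argument of (b)), so $f=g$ on $\bG$; equivalently, $E$ is determining for $f$.

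The step I expect to be the main obstacle is the interpolation count on each slice $\cD_{\zeta,a}$ in (c): one must confirm $F_a$ is genuinely a finite Blaschke product of degree $\le2\deg f$ (which forces a generic choice of $(\zeta,a)$ keeping $\phi_{\zeta,a}(\bT)$ off the poles of $f$), and one must produce at least $2\deg f+1$ interpolation conditions for $F_a-G_a$. In general position these are the $2N$ distinct simple intersection points above; when the $\beta_j$ are real or occur in conjugate pairs some of these points coincide, and one must instead count with intersection multiplicities, using that $f-g$ vanishes identically on each $D_j$, so that $F_a-G_a$ vanishes at an intersection point to the order of the local intersection number of $\cD_{\zeta,a}$ with $D_j$ (and to the sum of these numbers at points lying on several $D_j$); this again yields $2N$ conditions counted with multiplicity, to which the multiplicity version of Pick's Lemma applies. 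Verifying these multiplicity bounds via B\'ezout's theorem for the two conics $\overline{\cD_{\zeta,a}}$ and $\overline{D_j}$ in $\mathbb{P}^2$ — which meet in four points, none at infinity or at the origin for $a\ne0$, exactly two of which lie inside $\bG$ — is the technically heaviest ingredient.
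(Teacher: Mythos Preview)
Your arguments for (a) and (b) are fine and close in spirit to the paper's, with cosmetic differences: for (a) you use the product-of-roots observation to locate the unique root in $\bD$, while the paper argues by continuity of the roots as $a\to0$; for (b) you show directly that $\cD_\zeta$ contains a nonempty open subset of $\bG$, whereas the paper instead fixes $z\in\bD$ and observes that $w\mapsto f(z+w,zw)$ vanishes on the set $\{m_{\zeta,a}(z):a\in D(0;\epsilon)\}$, hence identically. Both routes are valid.

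For (c) you take a genuinely different route from the paper. The paper simply asserts that the sliced function $f_{\zeta,a}(z)=f\bigl(z+m_{\zeta,a}(z),\,zm_{\zeta,a}(z)\bigr)$ is rational inner on $\bD$ \emph{of degree less than $N$}, and then applies Lemma~\ref{Pick} directly with the $N$ interpolation nodes $\lambda_1,\dots,\lambda_N$ (one from each intersection $\cD_{\zeta,a}\cap D_j$). You instead bound $\deg F_a\le 2\deg f$ and compensate by unsymmetrising to manufacture $2N$ interpolation conditions from the two branches $m_{\zeta,a}(z)=\beta_jz$ and $m_{\zeta,a}(z)=\overline{\beta_j}z$. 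Your degree bound is certainly correct (e.g.\ $f(s,p)=p$ gives $F_a(z)=z\,m_{\zeta,a}(z)$, a Blaschke product of degree $2$), so the extra work you do is warranted; the paper's one-line degree claim is what makes its argument short.

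There is, however, a genuine gap in your degenerate-position analysis. The hypothesis is only that the $\beta_j$ are distinct in $\bT$; nothing forbids $\beta_i=\overline{\beta_j}$ for some $i\neq j$. But in that case the disks coincide as subsets of $\bG$: writing $w=\beta_iz$ one has $\pi(z,\beta_iz)=\pi(\beta_iz,z)=\pi(w,\overline{\beta_i}w)$, so $D_i=D_j$. Then ``$f-g$ vanishes on $D_i$'' and ``$f-g$ vanishes on $D_j$'' are the same condition, one cannot extract two separate factors from $f-g$, and your ``sum over $j$ of local intersection numbers'' double-counts. Concretely, for $N=2$ with $\beta_2=\overline{\beta_1}\notin\mathbb R$, the slice $\cD_{\zeta,a}$ meets $E=D_1=D_2$ in exactly two simple points, while $F_a$ may have degree $2$, so Pick's Lemma does not apply. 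Your B\'ezout picture is also slightly off for $\beta_j=-1$, since then $D_j=\{s=0\}$ is a line rather than a conic and only one of its two intersections with $\overline{\cD_{\zeta,a}}$ lies in $\bG$; that alone still leaves $2N-1$ conditions, which would suffice, but the conjugate-pair collapse $D_i=D_j$ is fatal to the argument as written.
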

\begin{proof}
For part (a), note that given a $\zeta\in\bT$ and $a\in\bD$, the analytic disk $\cD_{\zeta,a}$ intersects each $D_j$ at a non-zero point if and only if there exist $0\neq z\in\bD$ such that for each $j$, $\beta_jz=m_{\zeta,a}(z)$, which is equivalent to having $\overline{a}\beta_jz^2+(\beta_j-\zeta)z-a\zeta=0$. Therefore $\zeta$ must belong to $\bT\setminus\{\beta_j: j=1,2,\dots,N\}$. Now fix one such $\zeta$ and $j$. Let $\lambda_1(a),\lambda_2(a)$ be the roots of the polynomial above. Then clearly $\lambda_1(0)=0=\lambda_2(0)$. Therefore by continuity of the roots, there exists $\epsilon>0$ such that whenever $a\in D(0;\epsilon)$, $\lambda_1(a)$ and $\lambda_2(a)$ belong to $\bD$. This $\epsilon$ will of course depend on $j$ but since there are only finitely many $j$, we can find an $\epsilon>0$ so that (a) holds.

For part (b) we have to show that if $f:\mathbb{G}\to\overline\bD$ is any analytic function such that $f|_{\cD_\zeta}=0$, then $f=0$ on $\bG$. Fix $z\in\mathbb{D}$ and consider $f_z:\mathbb D\to\overline\bD$ defined by $f_z:w\mapsto f(z+w,zw)$. Since $f$ vanishes on $\mathcal D_\zeta$, $f_z$ vanishes on $\{m_{\zeta,a}(z):a\in D(0;\epsilon)\}$ which shows that $f_z=0$ on $\bD$. Since $z\in\bD$ is arbitrary, $f=0$ on $\bG$.

For (c), let $f$ be a rational inner function of degree less than $N$ and $g\in{\bS(\mathbb{G})}$ be such that $g=f$ on each $D_j$. For each $\zeta$ and $a$ as in part (a), $\cD_{\zeta,a}$ intersects each $D_j$ at say $(s_j,p_j)=(\lambda_j+m_{\zeta,a}(\lambda_j),\lambda_jm_{\zeta,a}(\lambda_j))$. Restrict $f$ and $g$ to $\cD_{\zeta,a}$ to get $f_{\zeta,a}(z)=f(z+m_{\zeta,a}(z), zm_{\zeta, a}(z))$ and $g_{\zeta,a}(z)=g(z+m_{\zeta,a}(z), zm_{\zeta, a}(z))$. Then clearly $f_{\zeta,a}$ is a rational inner function on $\bD$ of degree less than $N$ and $g_{\zeta,a}\in\bS(\bD)$. Then for each $j=1,2,\dots, N$, $g_{\zeta,a}(\lambda_j)=f_{\zeta,a}(\lambda_j)$. Therefore by Lemma \eqref{Pick}, we have $g_{\zeta,a}=f_{\zeta,a}$ on $\mathbb{D}$ for each $\zeta$ and $a$ as in part (a). Hence $g=f$ on $\cD$, which by part (b) gives $g=f$ on $\bG$. This completes the proof.
\end{proof}

\begin{thm}\label{Main-Result2}
For any $N\geq1$, there exists a set $D$ consisting of $(N^2-N+1)$ points in $\mathbb{G}$ such that $\cD$ is a determining set for any rational inner function of degree less than $N$.
\end{thm}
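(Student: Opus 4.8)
The plan is to replace each of the $N$ analytic disks $D_j$ of Proposition \ref{LE-1} by a set of just $N$ points, to recover the whole disk from those points via Pick's Lemma \ref{Pick}, and then to quote Proposition \ref{LE-1}(c); the saving of $N-1$ points over the naive count $N\cdot N$ comes from the fact that all the disks $D_j$ pass through a single common point.

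In detail, I would fix distinct points $\beta_1,\dots,\beta_N\in\bT\setminus\{-1\}$ no two of which are complex conjugates of one another, and set $D_j=\{(z+\beta_j z,\beta_j z^2):z\in\bD\}$ as in Proposition \ref{LE-1}. Note that $D_j$ is precisely the disk $\cD_{\beta_j,0}$ in the notation of Proposition \ref{LE-1}(a), since $m_{\beta_j,0}(z)=\beta_j z$. Each $D_j$ contains the point $(0,0)$, attained at the parameter value $z=0$. Moreover, writing $s=(1+\beta_j)z$ one sees that $D_j$ lies on the parabola $(1+\beta_j)^2 t=\beta_j s^2$, and with the above choice of the $\beta_j$'s these $N$ parabolas are pairwise distinct; hence $D_i\cap D_j=\{(0,0)\}$ for $i\ne j$, and since $\beta_j\ne-1$ the parametrization $z\mapsto(z+\beta_j z,\beta_j z^2)$ is injective. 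Now for each $j$ choose $N-1$ distinct nonzero parameters $\lambda_{j,1},\dots,\lambda_{j,N-1}\in\bD$ and put $P_{j,k}=(\lambda_{j,k}+\beta_j\lambda_{j,k},\beta_j\lambda_{j,k}^2)\in D_j$; by the two injectivity/intersection remarks just made, the $N(N-1)$ points $P_{j,k}$ are pairwise distinct and all different from $(0,0)$. Setting
$$\cD=\{(0,0)\}\cup\{P_{j,k}:1\le j\le N,\ 1\le k\le N-1\},$$
we have $|\cD|=1+N(N-1)=N^2-N+1$, and for each $j$ the set $\cD\cap D_j=\{(0,0),P_{j,1},\dots,P_{j,N-1}\}$ consists of exactly $N$ points.

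To see that $\cD$ is determining, let $f$ be a rational inner function on $\bG$ of degree less than $N$ and let $g\in\bS(\bG)$ satisfy $g=f$ on $\cD$. Fix $j$ and restrict both functions to $D_j=\cD_{\beta_j,0}$ via $z\mapsto(z+\beta_j z,\beta_j z^2)$, obtaining $f_j(z)=f(z+\beta_j z,\beta_j z^2)$ and $g_j(z)=g(z+\beta_j z,\beta_j z^2)$. Exactly as in the proof of Proposition \ref{LE-1}(c), $f_j$ is a rational inner function on $\bD$ of degree less than $N$ while $g_j\in\bS(\bD)$, and these agree at the $N$ points $0,\lambda_{j,1},\dots,\lambda_{j,N-1}$ of $\bD$. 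Lemma \ref{Pick} therefore gives $f_j=g_j$ on $\bD$, i.e. $f=g$ on $D_j$. Since $j$ was arbitrary, $f=g$ on $E=\bigcup_{j=1}^N D_j$, and Proposition \ref{LE-1}(c) then yields $f=g$ on $\bG$.

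The mathematical content here is the sharpness of the count, so the step that needs the most care is the combinatorial bookkeeping: Pick's Lemma forces $N$ interpolation nodes on each of the $N$ disks, which would give $N^2$ points in general, and it is only the observation that every $D_j$ passes through the origin — so that this single node is shared by all of them — that reduces the total to $N^2-N+1$. The accompanying verification that the remaining $N(N-1)$ nodes can be chosen pairwise distinct (which here follows at once because, for the chosen $\beta_j$, the $D_j$ meet pairwise only at the origin and each is injectively parametrized) is what makes the cardinality exactly equal to $N^2-N+1$ rather than merely bounded by it.
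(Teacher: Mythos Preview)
Your proof is correct and follows essentially the same route as the paper: place $N$ nodes (one of them the origin) on each of the $N$ disks $D_k$, apply Lemma~\ref{Pick} on each disk to get $f=g$ on $E=\bigcup D_k$, and then invoke Proposition~\ref{LE-1}(c). The paper uses the \emph{same} parameter values $\lambda_1=0,\lambda_2,\dots,\lambda_N$ on every disk, whereas you allow the nonzero parameters to vary with $j$; and you impose the extra constraints $\beta_j\neq-1$ and $\beta_i\neq\overline{\beta_j}$ precisely to make the cardinality count $N^2-N+1$ airtight, which the paper asserts in one sentence without these restrictions.
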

\begin{proof}For $N=1$, it is trivial because then a rational inner function of degree less than $1$ is identically constant. So suppose $N>1$. Let $\lambda_1:=0, \lambda_2,...,\lambda_N$ be distinct points in $\mathbb{D}$, $\beta_1,..., \beta_{N}$ be distinct points in $\mathbb{T}$ and $D_1,...,D_{N}$ be the analytic disks as in Proposition \ref{LE-1}. Consider the set
$$\cD=\{(\lambda_j+\beta_k\lambda_j, \beta_k\lambda_j^2): k,j=1,2,...,N\}.$$ 
Since $\beta_j$ and $\lambda_j$ are distinct, $\cD$ consists of precisely $N^2-N+1$ many points. Let $f$ be a rational inner function on $\bG$ and $g\in\bS(\bG)$ be such that $g$ agrees with $f$ on $\cD$. As before restrict $f$ and $g$ to each $D_k$ to obtain rational inner functions $f_{k}(z)=f(z+\beta_kz,z^2\beta_k)$ and $g_k(z)=g(z+\beta_kz,z^2\beta_k)$ on the unit disk $\bD$. We then have
$f_k(\lambda_j)=g_k(\lambda_j)$ for each $j=1,2,\dots,N$. Thus by Lemma \ref{Pick}, $f_k(z)=g_k(z)$ on $\bD$ for each $k=1,2,\dots,N$, which is same as saying that $f=g$ on $\cup_{k=1}^ND_k$. Consequently, by part (c) of Proposition \ref{LE-1}, $f=g$ on $\bG$.
\end{proof}

\subsection{Distinguished varieties as a determining and the uniqueness set}\label{SS:DistVar}
A rational function $f=g/h$ with relatively prime polynomials $g$ and $h$, is called {\em regular} if $h\neq{0}$ on $\overline{\mathbb{G}}$. For example, note that while the rational function $(3p-s)/(3-s)$ is regular, $(2p-s)/(2-s)$ is not.

We first recall the known results that will be used later. Let $\cW=Z(\xi)$ be a distinguished variety with respect to $\mathbb{G}$. Then it follows easily that $\cV=Z(\xi\circ\pi)$ defines a distinguished variety with respect to $\bD^2$. Lemma 1.2 of \cite{AM} produces a regular Borel measure $\nu$ on $\partial\cV:=\cV\cap\bT^2$ such that $\nu$ gives rise to a Hardy-type Hilbert function space on $\cV\cap\bD^2$, denoted by $H^2(\nu)$, i.e., $H^2(\nu)$ is the closure in $L^2(\nu)$ of polynomials such that evaluation at every point in $\cV\cap\bD^2$ is a bounded linear functional on $H^2(\nu)$. It was then shown in \cite[Lemma 3.2]{Pal-Shalit} that the push-forward measure $\mu(E)=\nu(\pi^{-1}(E))$ for every Borel subset $E$ of $\partial\cW:=\cW\cap b\Gamma$ has all the properties that $\nu$ has. Furthermore, the spaces $H^2(\mu)$ and $H^2(\nu)$ are unitary equivalent via the isomorphism given by
\begin{align}\label{Unitary}
 U:H^2(\mu)\rightarrow{H^2{(\nu)}}\quad \text{ by } \quad U:f\mapsto f\circ\pi.
\end{align}Note that if $k^\mu$ and $k^\nu$ are the Szeg\"o-type reproducing kernels for $H^2(\mu)$ and $H^2(\nu)$, respectively, then for every $(z,w)\in\cV\cap\bD^2$ and $f\in H^2(\mu)$
$$\langle U^*k^\nu_{(z,w)}, f\rangle_{H^2(\mu)}=\langle k^\nu_{(z,w)}, Uf\rangle_{H^2(\nu)}=f\circ\pi(z,w)=\langle k^\mu_{\pi(z,w)}, f\rangle_{H^2(\mu)}.$$
We observe the following.
\begin{lemma}\label{Measure}
Let $\cW$ be a distinguished variety with respect to $\mathbb G$ and $\mu$ be the regular Borel measure on $\partial\cW$ as in the preceding discussion. Then for every regular rational inner function $f$ on $\mathbb{G}$, the multiplication operator $M_f$ on $H^2(\mu)$ has a finite dimensional kernel.

\end{lemma}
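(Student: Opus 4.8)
The plan is to reduce the claim to a statement about the Hardy space $H^2(\nu)$ on the distinguished variety $\cV = Z(\xi\circ\pi)$ in $\bD^2$ via the unitary $U$ of \eqref{Unitary}, and then exploit the fact that a regular rational inner function pulls back to a well-behaved function on $\cV$. First I would observe that since $f = f\circ\pi$ on $\cW\cap\bG$ pulls back under $U$ to the operator $M_{f\circ\pi}$ on $H^2(\nu)$, and $U$ is unitary, it suffices to show $\operatorname{Ker} M_{f\circ\pi}$ is finite dimensional on $H^2(\nu)$. Writing $f = g/h$ in reduced form with $h$ nonvanishing on $\overline{\bG}$, the composition $\varphi := f\circ\pi = (g\circ\pi)/(h\circ\pi)$ is a rational function whose denominator $h\circ\pi$ is nonvanishing on $\overline{\bD^2}$, hence bounded below on $\cV\cap\overline{\bD^2}$; so $\varphi$ is holomorphic in a neighborhood of $\overline{\bD^2}$ and in particular is a bounded multiplier of $H^2(\nu)$.

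Next I would identify $\operatorname{Ker} M_\varphi$ concretely. If $M_\varphi q = 0$ in $H^2(\nu)$ for $q\in H^2(\nu)$, then $\varphi q = 0$ in $L^2(\nu)$, i.e. $\varphi q$ vanishes $\nu$-almost everywhere on $\partial\cV = \cV\cap\bT^2$. Since $\varphi$ is rational, $\varphi$ can vanish on $\partial\cV$ only on the finite set $\cV\cap\bT^2\cap Z(g\circ\pi)$ (the zero set of the numerator, intersected with the one-dimensional real-analytic curve $\partial\cV$, is finite unless a whole component of $\cV$ is contained in $Z(g\circ\pi)$ — this degenerate case should be ruled out or absorbed, see below). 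Away from this finite set $\varphi\neq 0$, so $q = 0$ $\nu$-a.e. off a finite set; if $\nu$ has no atoms, this forces $q = 0$ and the kernel is trivial. In general $\nu$ may carry finitely many atoms at the singular points of $\cV$ on $\bT^2$, and $H^2(\nu)$ is then a direct sum of a "continuous part" and a finite-dimensional "atomic part"; the kernel of $M_\varphi$ is then contained in the atomic part, hence finite dimensional. The key structural facts I would invoke are from \cite{AM} (Lemma 1.2): $\nu$ is a finite measure supported on the one-dimensional set $\partial\cV$, and evaluation at each point of $\cV\cap\bD^2$ is bounded on $H^2(\nu)$, which pins down $H^2(\nu)$ tightly enough that a.e.-vanishing off a finite set is a strong constraint.

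The main obstacle I anticipate is controlling the possibility that $g\circ\pi$ vanishes identically on an irreducible component of $\cV$, which would make $\varphi$ vanish on a set of positive $\nu$-measure and could a priori produce an infinite-dimensional kernel. I would handle this by using that $f$ is \emph{inner}: $\lim_{r\to 1^-}|f|=1$ a.e. on the distinguished boundary, equivalently $|\varphi| = 1$ $\nu$-a.e. on $\partial\cV$ (this uses that $\nu$, by its construction in \cite{AM}, is comparable to the natural "arc-length-type" measure on $\partial\cV$ and detects the boundary behavior of $H^2(\bG)$ functions). Since $|\varphi| = 1$ a.e.\ $[\nu]$, $\varphi$ cannot vanish on any set of positive $\nu$-measure, so $\varphi q = 0$ in $L^2(\nu)$ forces $q = 0$ in $L^2(\nu)$, and therefore $\operatorname{Ker} M_\varphi = \{0\}$. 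Transporting back through $U$, we conclude $\operatorname{Ker} M_f = \{0\}$ on $H^2(\mu)$, which is certainly finite dimensional; the slightly weaker assertion in the lemma statement leaves room for the case where one only knows $|\varphi|=1$ off a $\nu$-null set plus possibly atoms, still yielding at most finite dimension. The one genuinely technical point to get right is the precise sense in which the measure $\nu$ of \cite{AM} "sees" the inner boundary values — I would cite the relevant lemma there rather than reprove it.
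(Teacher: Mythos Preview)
Your first step---transporting the problem to $H^2(\nu)$ via the unitary $U$ and noting that $M_f$ on $H^2(\mu)$ is unitarily equivalent to $M_{f\circ\pi}$ on $H^2(\nu)$---is exactly what the paper does (the paper checks the intertwining on kernel functions; you assert it). The divergence is in the second step: the paper does not argue directly but simply invokes \cite[Theorem~3.6]{DS-1} to conclude that $\operatorname{Ker} M_{f\circ\pi}$ is finite dimensional. Your route is instead a self-contained elementary argument: regularity makes $f\circ\pi$ continuous on $\overline{\bD^2}$, and the inner property then forces $|f\circ\pi|=1$ \emph{pointwise} on $\bT^2\supset\partial\cV$, so multiplication by $f\circ\pi$ is an $L^2(\nu)$-isometry and $\operatorname{Ker} M_{f\circ\pi}=\{0\}$. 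That reasoning is correct and in fact gives a stronger conclusion than stated. Your detour through zero sets of the numerator, possible atoms of $\nu$, and component-vanishing is unnecessary once this observation is in hand, and the ``technical point'' you flag about whether $\nu$ sees the inner boundary values evaporates: continuity upgrades the a.e.\ unimodularity on $\bT^2$ to everywhere, so no measure-theoretic subtlety arises.

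One caveat worth recording: the lemma as literally stated, and both your argument and the paper's, concern $\operatorname{Ker} M_f$. But the applications downstream (Proposition~\ref{Main_2} and Theorem~\ref{MainThm}) require finite-dimensionality of $\operatorname{Ker} M_f^*$. Your isometry argument says nothing about the cokernel; for that content the appeal to Scheinker's result is doing genuine work that your direct approach does not replace.
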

\begin{proof}
We note that for every $(z,w)\in\cV\cap\bD^2$,
\begin{align*}
U^*M_{f\circ\pi}^*k^\nu_{(z,w)}=\overline{f\circ\pi(z,w)}U^*k^\nu_{(z,w)}=\overline{f\circ\pi(z,w)}k^\mu_{\pi(z,w)}=M_f^*k^\mu_{\pi(z,w)}=M_{f}^*U^*k^\nu_{(z,w)}.
\end{align*}
Thus $M_f$ on $H^2(\mu)$ and $M_{f\circ\pi}$ on $H^2(\nu)$ are unitarily equivalent via the unitary $U$ as in \ref{Unitary}. Now the lemma follows from \cite[Theorem 3.6]{DS-1}, which states that $\operatorname{Ker}M_{f\circ \pi}$ is finite dimensional.
\end{proof}

\begin{proposition}\label{Main_2}
Let $\mathcal W=Z(\xi)$ be a distinguished variety with respect to $\mathbb G$ and $f$ be a regular rational inner function on $\mathbb G$. If $\dim\operatorname{ Ker} M_{f}^* < N$, then any $N$ distinct points in $\cW\cap\bG$ is a determining set for $(f, \cW\cap\mathbb{G})$. 
\end{proposition}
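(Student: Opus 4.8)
\ The plan is to move the problem into the Hardy-type space $H^2(\mu)$ on $\cW\cap\bG$ built before Lemma \ref{Measure}, and to run a reproducing-kernel argument in the spirit of Theorem \ref{T:Unique-Sl}. Two properties of $H^2(\mu)$ carry the proof. (i) Because $f$ is \emph{regular} rational \emph{inner}, it extends continuously to $\overline\bG$ with $|f|\equiv1$ on $b\bG$; since $\operatorname{supp}\mu\subseteq\partial\cW\subseteq b\bG$, this gives $|f|=1$ $\mu$-a.e., so the multiplication operator $M_f$, which is bounded by (ii), is an \emph{isometry} on $H^2(\mu)$. Hence $P:=I-M_fM_f^*$ is the orthogonal projection onto $\operatorname{Ker}M_f^*=(fH^2(\mu))^{\perp}$, and the hypothesis $\dim\operatorname{Ker}M_f^*<N$ is exactly $\operatorname{rank}P<N$. (ii) Every $g\in\bS(\bG)$ restricts to a \emph{contractive} multiplier of $H^2(\mu)$.

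I would prove (ii) as follows. For $g\in\bS(\bG)$ one has $g\circ\pi\in\bS(\bD^2)$. On $\operatorname{supp}\nu=\cV\cap\bT^2$ the coordinate functions satisfy $|z_1|=|z_2|=1$, so $M_{z_1},M_{z_2}$ are isometries on $H^2(\nu)$, whence $(1-z_i\overline{z_i'})k^\nu\succeq0$ on $\cV\cap\bD^2$. Combining this with the Ando decomposition $1-(g\circ\pi)(z)\overline{(g\circ\pi)(z')}=(1-z_1\overline{z_1'})A(z,z')+(1-z_2\overline{z_2'})B(z,z')$ (with $A,B$ positive kernels) and taking Schur products with $k^\nu$ shows $(1-(g\circ\pi)(z)\overline{(g\circ\pi)(z')})k^\nu\succeq0$, i.e. $g\circ\pi|_{\cV\cap\bD^2}\in\operatorname{Mult}_1H^2(\nu)$; carrying this through the unitary $U$ of \eqref{Unitary} exactly as in the proof of Lemma \ref{Measure} yields $g|_{\cW\cap\bG}\in\operatorname{Mult}_1H^2(\mu)$ (this is essentially in \cite{AM}). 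I also record that $H^2(\mu)$ contains the polynomials, so the point-evaluation functionals at distinct points of $\cW\cap\bG$ are linearly independent and, consequently, the corresponding kernel vectors are linearly independent.

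With this in hand, fix distinct $\lambda_1,\dots,\lambda_N\in\cW\cap\bG$, set $\cD=\{\lambda_1,\dots,\lambda_N\}$, and take $g\in\bS(\bG)$ with $g=f$ on $\cD$. As $\operatorname{rank}P<N$ but $k^\mu_{\lambda_1},\dots,k^\mu_{\lambda_N}$ are independent, there is $0\neq\gamma\in\bC^N$ with $PL=0$, where $L:=\sum_{j=1}^N\gamma_jk^\mu_{\lambda_j}$; thus $L\in\operatorname{Ran}M_f$ and $L=M_fM_f^*L$. Put $w:=M_f^*L=\sum_j\gamma_j\overline{f(\lambda_j)}k^\mu_{\lambda_j}$; since $f=g$ on $\cD$ this also equals $\sum_j\gamma_j\overline{g(\lambda_j)}k^\mu_{\lambda_j}=M_g^*L$, and $L=M_fw$, so $w\neq0$. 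Moreover
\begin{align*}
\langle(I-M_gM_g^*)L,L\rangle=\|L\|^2-\|M_g^*L\|^2=\|L\|^2-\|M_f^*L\|^2=\langle PL,L\rangle=0,
\end{align*}
and $M_g$ being a contraction forces $I-M_gM_g^*\succeq0$, hence $(I-M_gM_g^*)L=0$, i.e. $L=M_gw$. Therefore $M_fw=L=M_gw$ in $H^2(\mu)$; evaluating at $z\in\cW\cap\bG$ via $M_h^*k^\mu_z=\overline{h(z)}k^\mu_z$ ($h\in\{f,g\}$) gives $(f(z)-g(z))w(z)=0$ for all $z\in\cW\cap\bG$. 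Since $w\in H^2(\mu)$ is analytic on $\cW\cap\bG$ and $w\neq0$, its zero set is nowhere dense, so $f=g$ on a dense subset and, by continuity of $f$ and $g$, on all of $\cW\cap\bG$; thus $\cD$ is determining for $(f,\cW\cap\bG)$.

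The step I expect to be the real obstacle is the concluding identity-theorem argument when $\cW\cap\bG$ is \emph{reducible}: one must then choose $L$ (equivalently $w$) so that it does not vanish identically on any irreducible component. I would handle this by observing that, because $k^\mu$ is a zero-free Szeg\"o-type kernel and $H^2(\mu)$ separates points of every component, no component can be annihilated by an $(N-\operatorname{rank}P)$-dimensional family of functionals of the form $\sum_j\gamma_jk^\mu_{\lambda_j}$, so $L$ can be picked outside all the bad subspaces; for irreducible $\cW$, which is the case of interest, this difficulty does not arise. A secondary point needing care is the assertion in (ii) that $\bS(\bD^2)$ restricts into $\operatorname{Mult}_1H^2(\nu)$, which is precisely where Ando's inequality enters.
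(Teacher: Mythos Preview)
Your argument is correct in substance and amounts to reproving, directly in $H^2(\mu)$, the very result the paper simply \emph{quotes}. The paper's proof is a two-line reduction: it lifts the $N$ points $w_j\in\cW\cap\bG$ to preimages $v_j\in\cV\cap\bD^2$ under $\pi$, observes that $g\circ\pi$ and $f\circ\pi$ agree at the $v_j$, and then invokes \cite[Theorem~1.7]{DS-1} on the bidisk variety $\cV$ (the unitary equivalence of Lemma~\ref{Measure} gives $\dim\operatorname{Ker}M_{f\circ\pi}^*=\dim\operatorname{Ker}M_f^*<N$, so Scheinker's hypothesis is met). All the ingredients you carefully assemble --- that $M_f$ is isometric on the variety Hardy space, that $\bS$-functions restrict to contractive multipliers via Ando/Agler, the rank argument producing $L$ with $PL=0$, and the identity-theorem endgame --- are precisely the content of Scheinker's proof, which the paper treats as a black box.

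What each approach buys: the paper's route is short and offloads every technical point (contractive multipliers, the reducible case, the zero-set argument) onto \cite{DS-1}. Your route is self-contained and shows exactly where the hypothesis $\dim\operatorname{Ker}M_f^*<N$ enters, but you correctly identify the one genuine soft spot: when $\cW$ is reducible, one must ensure $w$ does not vanish identically on a component. Your proposed fix (choosing $\gamma$ generically in the $(N-\operatorname{rank}P)$-dimensional null space) is the right idea but, as written, is heuristic; note that Proposition~\ref{Main_2} does \emph{not} assume irreducibility, so in a fully self-contained version this step would need a clean argument. The paper avoids the issue entirely by citing \cite{DS-1}.
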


\begin{proof}
Let $\{w_1,w_2,\dots,w_N\}$ be distinct points in $\cW\cap\bG$ and $g\in\bS(\bG)$ be such that $g(w_j)=f(w_j)$ for each $j=1,2,\dots,N$. Let $\cV=Z(\xi\circ\pi)$ and $\{v_1,v_2,\dots,v_N\}$ be in $\cV\cap\bD^2$ such that $\pi(v_j)=w_j$ for all $j=1,2,...,N$. Thus $g\circ\pi (v_j)=f\circ\pi(v_j)$ for each $j=1,2,\dots,N$. Theorem 1.7 of \cite{DS-1} yields $g\circ\pi=f\circ\pi$ on $\cV\cap\bD^2$ which is same as $g=f$ on $\cW\cap\bG$. This completes the proof.
\end{proof}

The $2$-degree of a two-variable polynomial $\xi\in\mathbb{C}[z,w]$ is defined as $(d_1, d_2)=:\operatorname{2-deg}\xi$, where $d_1$ and $d_2$ are the largest power of $z$ and $w$, respectively in the expansion of $\xi(z,w)$. The reflection of a two-variable polynomial $\xi\in\mathbb{C}[z,w]$ is defined as 
$$
\widetilde{\xi}(z,w)=z^{d_1}w^{d_2}\overline{\xi\big(\frac{1}{\overline{z}},\frac{1}{\overline{w}}\big)}.
$$For a rational function $f(z,w)=\xi(z,w)/\eta(z,w)$ with $\xi$ and $\eta$ having no common factor, the $2$-degree of $f$ is defined to be the $2$-degree of the numerator. For two pairs of non-negative integers $(p,q)$ and $(m,n)$, we write $(p,q)\leq(m,n)$ to indicate that $p\leq m$ and $q\leq n$.

\begin{proposition}\label{Main3}
Let $\cW=Z(\xi)$ be an irreducible distinguished variety and $f$ be a regular rational inner function on $\mathbb{G}$ of the form
\begin{align}\label{RatInnG}
f\circ\pi(z,w)=(zw)^m\frac{\widetilde{\eta\circ\pi}(z,w)}{\eta\circ\pi(z,w)}.
\end{align} If $\operatorname{2-deg}\xi\circ\pi \leq \operatorname{2-deg}f\circ\pi$, then for each $(s,p)\in\mathbb{G}\setminus(\mathbb{G}\cap\cW)$ there exists a regular rational inner function $g$ on $\bG$ such that $g$ coincides with $f$ on $\cW\cap\bG$ but $g(s,p)\neq f(s,p)$.
\end{proposition}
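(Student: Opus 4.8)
The plan is to build the competing function $g$ by perturbing the variety $\cW$ rather than the function $f$ directly. Recall that $f\circ\pi(z,w)=(zw)^m\,\widetilde{\eta\circ\pi}/\eta\circ\pi$ is a rational inner function on $\bD^2$, so by the standard description of rational inner functions on the bidisk we may assume $\eta\circ\pi$ has no zeros in $\bD^2$ and $\widetilde{\eta\circ\pi}$ no zeros outside $\overline{\bD}^2$. First I would fix the target point $(s,p)\in\bG\setminus(\bG\cap\cW)$, choose $(z_0,w_0)\in\bD^2$ with $\pi(z_0,w_0)=(s,p)$, and note $\xi\circ\pi(z_0,w_0)\neq0$. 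The idea is to look for $g$ of the form
\begin{align*}
g\circ\pi(z,w)=(zw)^m\,\frac{\widetilde{\eta\circ\pi}(z,w)+c\,\widetilde{\xi\circ\pi}(z,w)}{\eta\circ\pi(z,w)+\bar c\,\xi\circ\pi(z,w)}
\end{align*}
for a suitably small scalar $c\in\bC$. Here the key point is that since $\cW$ is distinguished with respect to $\bG$, the variety $\cV=Z(\xi\circ\pi)$ is distinguished with respect to $\bD^2$, so $\xi\circ\pi$ has no zeros in $\bD^2$ and $\widetilde{\xi\circ\pi}$ has no zeros in the complement of $\overline{\bD}^2$; the hypothesis $\operatorname{2-deg}\xi\circ\pi\le\operatorname{2-deg}f\circ\pi$ guarantees that the numerator of the displayed expression genuinely has $2$-degree equal to that of $\eta\circ\pi$ (i.e.\ no degree drop), so that the reflection bookkeeping is consistent and the fraction is again a rational inner function on $\bD^2$ of the correct degree.

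The verification then proceeds in three steps. (i) \emph{$g$ agrees with $f$ on $\cW\cap\bG$:} on $\cV$ we have $\xi\circ\pi=0$, hence $\widetilde{\xi\circ\pi}$ also vanishes there up to the appropriate monomial factor, so numerator and denominator of $g\circ\pi$ reduce to those of $f\circ\pi$ on $\cV\cap\bD^2$; since $\pi(\cV\cap\bD^2)=\cW\cap\bG$ this gives $g=f$ on $\cW\cap\bG$. (ii) \emph{$g\in\bS(\bG)$, i.e.\ $g$ is a well-defined holomorphic map $\bG\to\overline{\bD}$:} for $|c|$ small the perturbed denominator $\eta\circ\pi+\bar c\,\xi\circ\pi$ stays zero-free on $\bD^2$ (both $\eta\circ\pi$ and $\xi\circ\pi$ are zero-free there, and one uses compactness / a lower bound on $|\eta\circ\pi|$ on slices, or a Rouché-type argument, to keep the sum zero-free for small $|c|$), and the modulus-one boundary behaviour on $\bT^2$ follows from the fact that numerator is the reflection of denominator. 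One must also check that the quotient descends to a function of the symmetric variables $(s,p)$, which is automatic because both $\xi\circ\pi$ and $\eta\circ\pi$ are symmetric in $(z,w)$ by construction ($\xi,\eta$ being polynomials in $(s,p)$). (iii) \emph{$g(s,p)\neq f(s,p)$:} compute $g\circ\pi(z_0,w_0)-f\circ\pi(z_0,w_0)$; this is $(z_0w_0)^m$ times a quotient whose numerator, to first order in $c$, is
\begin{align*}
c\,\widetilde{\xi\circ\pi}(z_0,w_0)\,\eta\circ\pi(z_0,w_0)-\bar c\,\xi\circ\pi(z_0,w_0)\,\widetilde{\eta\circ\pi}(z_0,w_0)+O(|c|^2),
\end{align*}
which is not identically zero in $c$ since $\xi\circ\pi(z_0,w_0)\neq0$ and $\eta\circ\pi(z_0,w_0)\neq0$ (and $z_0w_0\neq0$ unless $(s,p)$ lies on the coordinate-type locus, a case handled separately by choosing a different representative or absorbing the zero). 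Hence for all sufficiently small $c\neq0$ off a discrete set we get $g(s,p)\neq f(s,p)$.

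I expect the main obstacle to be step (ii): ensuring that the perturbed function is genuinely inner and not merely bounded — in particular that the perturbation does not create poles in $\bG$ (denominator zeros migrating from $b\bG$ into $\bG$) and does not destroy the exact balance between numerator and its reflection that forces $|g|=1$ a.e.\ on $b\bG$. Controlling this is where the distinguished-variety hypothesis ($\cV\cap\partial\bD^2=\cV\cap b\bD^2$, equivalently $\widetilde{\xi\circ\pi}$ and $\xi\circ\pi$ have no zeros on $\partial\bD^2\setminus b\bD^2$) and the $2$-degree condition do the real work: the degree condition is exactly what prevents a spurious drop in degree that would spoil the reflection identity $\widetilde{(\eta\circ\pi+\bar c\,\xi\circ\pi)}=\widetilde{\eta\circ\pi}+c\,\widetilde{\xi\circ\pi}$. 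A secondary, more bookkeeping-level obstacle is the monomial factor $(zw)^m$ and the possibility $z_0w_0=0$, which merely requires choosing the preimage $(z_0,w_0)$ of $(s,p)$ with $z_0w_0\neq0$ whenever $p\neq0$, and treating $p=0$ by a direct argument.
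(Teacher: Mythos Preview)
Your overall strategy is the same as the paper's---perturb $f$ by a term built from $\xi$ so that the perturbation vanishes on $\cW$---but the specific formula you propose does \emph{not} produce an inner function, and this is where the degree hypothesis actually enters. Write $\operatorname{2-deg}\eta\circ\pi=(l,l)$ and $\operatorname{2-deg}\xi\circ\pi=(n,n)$. For your $g\circ\pi$ to be inner one needs the numerator to be the reflection of the denominator. But reflecting $\eta\circ\pi+\bar c\,\xi\circ\pi$ with respect to degree $(m+l,m+l)$ gives
\[
(zw)^m\,\widetilde{\eta\circ\pi}+c\,(zw)^{\,m+l-n}\,\widetilde{\xi\circ\pi},
\]
whereas your numerator is $(zw)^m\widetilde{\eta\circ\pi}+c\,(zw)^m\widetilde{\xi\circ\pi}$. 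These agree only when $l=n$; in general your $g$ has $|g\circ\pi|\neq 1$ on $\bT^2$ and step~(ii) fails. The paper's fix is precisely to insert the compensating monomial: it takes
\[
g_\epsilon(z,w)=\frac{(zw)^m\widetilde{\eta\circ\pi}(z,w)+\epsilon\,\widetilde{\xi\circ\pi}(z,w)}{\eta\circ\pi(z,w)+\epsilon\,(zw)^{\,m+l-n}\xi\circ\pi(z,w)}
\]
with $\epsilon>0$ real, and the hypothesis $\operatorname{2-deg}\xi\circ\pi\le\operatorname{2-deg}f\circ\pi$ is used exactly to guarantee $m+l-n\ge 0$, so that this factor is a polynomial. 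Your reading of the hypothesis as ``no degree drop'' is therefore not the right one.

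There is also a factual slip that you should correct: a distinguished variety by definition meets the open domain, so $\xi\circ\pi$ certainly \emph{does} vanish in $\bD^2$; what is true (and what you need) is that $\eta\circ\pi$ is zero-free on $\overline{\bD}^2$ by regularity of $f$, so a small perturbation of it remains zero-free there. Finally, for step~(i) you should invoke the known fact (Knese) that for an irreducible distinguished variety one has $\xi\circ\pi=c\,\widetilde{\xi\circ\pi}$ with $|c|=1$; this is why both the numerator and denominator perturbations vanish simultaneously on $\cV$. Once you repair the formula as above, your first-order computation in step~(iii) is replaced in the paper by a cleaner modulus argument: equality $g_\epsilon=f$ at $(z_0,w_0)$ forces $|f\circ\pi(z_0,w_0)|=|z_0w_0|^{-(m+l-n)}\ge 1$, contradicting the maximum principle.
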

\begin{proof}
Let $\operatorname{2-deg}\eta\circ\pi=(l,l)$ and $\operatorname{2-deg}\xi\circ\pi=(n,n)$. The hypothesis then is that $m+l-n$ is non-negative. For $\epsilon>0$, define a symmetric function $g_\epsilon$ on $\bD^2$ as
\begin{align}\label{Rat}
g_{\epsilon}(z,w)=
\frac{(zw)^m\widetilde{\eta\circ\pi}(z,w)+\epsilon \widetilde{\xi\circ\pi}(z,w)}{\eta\circ\pi(z,w)+\epsilon(zw)^{m+l-n}\xi\circ\pi(z,w)}.
\end{align}
Simple computation shows that the reflection of the denominator of $g_\epsilon$ is equal to the numerator of $g_\epsilon$, which implies that each each $g_\epsilon$ is a rational inner function on $\bD^2$ provided that the denominator does not vanish on $\bD^2$. Since $\eta\circ\pi$ does not vanish on $\overline{\bD}^2$, we can always find a sufficiently small $\epsilon$ so that the denominator of each $g_\epsilon$ does not vanish in $\overline{\bD}^2$, thus making $g_\epsilon$ regular.

By Proposition 4.3 of \cite{Knese-Trans}, $\xi\circ\pi=c \widetilde{\xi\circ\pi}$ for some $c\in\bT$. This ensures that each $g_\epsilon$ coincides with $f$ on $\cW\cap\bG$. Now let $(z_0,w_0)\in\bD^2$ be such that $\pi(z_0,w_0)\in\bG\setminus\cW$. Then $g_\epsilon(z_0,w_0)=f\circ\pi(z_0,w_0)$
if and only if 
 $$\frac{(z_0w_0)^m\widetilde{\eta\circ\pi}(z_0,w_0)+\epsilon \Bar{c}\xi\circ\pi(z_0,w_0)}{\eta\circ\pi(z_0,w_0)+\epsilon(z_0w_0)^{m+l-n}\xi\circ\pi(z_0,w_0)}=(z_0w_0)^m\frac{\widetilde{\eta\circ\pi}(z_0,w_0)}{\eta\circ\pi(z_0,w_0)}, $$ which, after cross-multiplication and using the fact that $\xi\circ\pi(z_0,w_0)\neq0$, leads to
\begin{align}\label{eq_g}
\overline{c}\eta\circ\pi(z_0,w_0)=(z_0w_0)^{2m+l-n}\widetilde{\eta\circ\pi}(z_0,w_0).
  \end{align}
Since $\eta\circ\pi$ does not vanish on $\overline{\bD}^2$, we have $z_0w_0\neq0$. Therefore the above equation holds if and only if
\begin{align}\label{AuxEqn}
f\circ\pi(z_0,w_0)=(z_0w_0)^m\frac{\widetilde{\eta\circ\pi}(z_0,w_0)}{\eta\circ\pi(z_0,w_0)}=\frac{\overline{c}}{(z_0w_0)^{m+l-n}}.
\end{align} 
If $m+l-n=0$, then $f$ is a constant function. The hypothesis on the 2-degrees of $\xi$ and $f$ then implies that $\xi$ must be constant. This is not possible because $\xi$ defines a distinguished variety. Therefore $m+l-n\geq 1$, in which case, equation \eqref{AuxEqn} implies that $|f\circ\pi(z_0,w_0)|>1$. This again is a contradiction because $f$ is a rational inner function and so by the Maximum Modulus Principle, $|f\circ\pi(z)|\leq 1$ for every $(z,w)\in\bD^2$. Consequently, $g_\epsilon(s,p)\neq f(s,p)$ for every $(s,p)\in \bG\setminus (\cW\cap\bG)$.
\end{proof}
\begin{remark}
In a forthcoming paper \cite{BK} it is shown that any rational inner function on $\bG$ is of the form \eqref{RatInnG} possibly multiplied by a unimodular constant.
\end{remark}
\begin{thm}\label{MainThm}
Let $\cW=Z(\xi)$ be an irreducible distinguished variety with respect to $\mathbb{G}$, $f$ be a regular rational inner function on $\bG$ of the form \eqref{RatInnG} such that $\operatorname{2-deg}\xi\circ\pi \leq \operatorname{2-deg}f\circ\pi$, and $\cD$ be any subset of $\cW\cap\mathbb{G}$ consisting of at least $1+\dim\operatorname{Ker}M_f^*$ many points. Then $\cW\cap\mathbb{G}$ is the uniqueness set for $(f,\cD)$.
 \end{thm}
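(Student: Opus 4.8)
The plan is to combine the two preparatory propositions directly. Recall that to show $\cW\cap\bG$ is the uniqueness set for $(f,\cD)$ I must verify two things: first, that $\cD$ is determining for $(f,\cW\cap\bG)$, i.e., any $g\in\bS(\bG)$ agreeing with $f$ on $\cD$ must agree with $f$ on all of $\cW\cap\bG$; and second, that $\cW\cap\bG$ is the \emph{largest} such set, i.e., for every $(s,p)\in\bG\setminus(\cW\cap\bG)$ there is some $g\in\bS(\bG)$ agreeing with $f$ on $\cD$ (hence on $\cW\cap\bG$) with $g(s,p)\ne f(s,p)$.

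First I would dispose of the determining part. By hypothesis $\cD\subseteq\cW\cap\bG$ has at least $1+\dim\operatorname{Ker}M_f^*$ points; pick any $N$ distinct points $w_1,\dots,w_N\in\cD$ with $N=1+\dim\operatorname{Ker}M_f^*$, so that $\dim\operatorname{Ker}M_f^*<N$. Since $f$ is a regular rational inner function on $\bG$ and $\cW=Z(\xi)$ is a distinguished variety with respect to $\bG$, Proposition \ref{Main_2} applies and tells me that $\{w_1,\dots,w_N\}$ is already a determining set for $(f,\cW\cap\bG)$. A fortiori the larger set $\cD$ is determining for $(f,\cW\cap\bG)$: if $g=f$ on $\cD$ then $g=f$ on $\{w_1,\dots,w_N\}$, hence $g=f$ on $\cW\cap\bG$. (Here I should also note that $\dim\operatorname{Ker}M_f^*=\dim\operatorname{Ker}M_f$ is finite, which is exactly the content of Lemma \ref{Measure} applied to the measure $\mu$ on $\partial\cW$; so the hypothesis ``at least $1+\dim\operatorname{Ker}M_f^*$ points'' is non-vacuous.)

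Next I would establish maximality. Fix $(s,p)\in\bG\setminus(\cW\cap\bG)$. Because $f$ has the form \eqref{RatInnG} and $\operatorname{2-deg}\xi\circ\pi\le\operatorname{2-deg}f\circ\pi$ and $\cW$ is irreducible, Proposition \ref{Main3} produces a regular rational inner function $g$ on $\bG$ with $g=f$ on $\cW\cap\bG$ but $g(s,p)\ne f(s,p)$. In particular $g\in\bS(\bG)$ and $g=f$ on $\cD$ (since $\cD\subseteq\cW\cap\bG$). This $g$ witnesses that $(s,p)$ does not lie in $\bigcap\{Z(g-f):g\in\bS(\bG),\ g=f\text{ on }\cD\}$. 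Combining the two halves: the intersection $\bigcap\{Z(g-f):g\in\bS(\bG),\ g=f\text{ on }\cD\}$ contains $\cW\cap\bG$ (by the determining property) and is contained in $\cW\cap\bG$ (by maximality), hence equals $\cW\cap\bG$, which is precisely the assertion that $\cW\cap\bG$ is the uniqueness set for $(f,\cD)$.

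There is essentially no obstacle here beyond bookkeeping; the theorem is a clean corollary of Propositions \ref{Main_2} and \ref{Main3} together with the finiteness statement of Lemma \ref{Measure}. The one point to be careful about is that Proposition \ref{Main3} is phrased for $(z_0,w_0)\in\bD^2$ with $\pi(z_0,w_0)\in\bG\setminus\cW$, so I should remark at the outset that every $(s,p)\in\bG$ is of the form $\pi(z_0,w_0)$ for some $(z_0,w_0)\in\bD^2$ (as $\bG=\pi(\bD^2)$) and that $\pi(z_0,w_0)\notin\cW$ is equivalent to $(s,p)\notin\cW\cap\bG$; then the conclusion of Proposition \ref{Main3} is exactly what is needed. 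It is also worth stating explicitly that the functions $g$ supplied by Proposition \ref{Main3} are in $\bS(\bG)$, which holds since a regular rational inner function on $\bG$ is bounded by $1$ in modulus on $\bG$ by the Maximum Modulus Principle.
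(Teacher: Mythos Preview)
Your proposal is correct and follows essentially the same route as the paper: invoke Lemma \ref{Measure} for finiteness, apply Proposition \ref{Main_2} to get that $\cD$ is determining for $(f,\cW\cap\bG)$, and apply Proposition \ref{Main3} to rule out any point of $\bG\setminus(\cW\cap\bG)$ from the uniqueness set. One small slip: the parenthetical identity $\dim\operatorname{Ker}M_f^*=\dim\operatorname{Ker}M_f$ is not true in general (indeed $M_f$ is injective here), and all that is needed---and all that the paper actually uses---is the finiteness of $\dim\operatorname{Ker}M_f^*$.
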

 \begin{proof}
Consider the multiplication operator $M_{f}$ on $H^2(\mu)$, where $H^2(\mu)$ is the Hilbert space corresponding to $\cW$ as mentioned in Lemma \ref{Measure}. By this lemma, $\dim\operatorname{Ker}(M_{f}^*)$ is finite. So let $N$ be such that $\dim\operatorname{Ker}(M_{f}^*)<N$ and $\cD=\{\lambda_1, \lambda_2,..., \lambda_N\}\subset\cW\cap\mathbb{G}$. By Proposition \ref{Main_2}, $\cD$ is determining for $(f,\cW\cap\mathbb{G})$. We use Proposition \ref{Main3} to show that $\cW\cap\mathbb{G}$ is the uniqueness set. Toward that end, pick $(s,p)\in\bG\setminus\cW\cap\bG$. Proposition \ref{Main3} guarantees the existence of a (regular) rational inner function $g$ that coincides with $f$ on $\cW\cap\bG$ but $g(s,p)\neq f(s,p)$. This proves that $\cW\cap\mathbb{G}$ is the uniqueness set for the interpolation problem. This completes the proof of the theorem.
\end{proof}

\begin{remark} An {\em extremal} interpolation problem in $\bG$ is a solvable problem with no solution of supremum norm less than $1$. Let $\cD=\{\lambda_1,\lambda_2,\dots,\lambda_N\}$ be a subset of $\bG$ and $f$ be a rational inner function on $\bG$ such that the $N$-point Pick problem $\lambda_j\mapsto f(\lambda_j)$ is extremal and that none of the $(N-1)$-point subproblems is extremel. Then it is shown in \cite{DKS} that the uniqueness set for $(f,\cD)$ contains a distinguished variety. Theorem \ref{MainThm} can be seen as a converse to this result. Indeed, Theorem \ref{MainThm} starts with a distinguished variety $\cW=Z(\xi)$ and produces a regular rational inner function $f$ and a finite set $\cD$ depending on $\cW$ such that $\cW\cap\bG$ is the uniqueness set for $(f,\cD)$. In addition, we note that the problem $\lambda_j\mapsto f(\lambda_j)$ is an extremal problem. This is because if $g$ is any solution of the problem, then by Proposition \ref{Main_2} $g=f$ on $\cW\cap\bG$. Thus
$$\|g\|_{\infty,\bG}\geq \|g\|_{\infty,\cW\cap\bG}=\|f\|_{\infty,\cW\cap\bG}=1.$$
The last equality follows because $f$ is a regular rational inner function.
\end{remark}

There is a sufficient condition for a distinguished variety to be determining. In the theorem below and in its proof, the inner product $\langle ,\rangle_{H^2}$ for analytic functions $f,g:\bG\to\bC$ is defined to be
\begin{align}\label{InnProd}
\langle f,g\rangle_{H^2}= \sup_{0<r<1}\int_{\bT\times\bT} f\circ\pi(r\zeta_1,r\zeta_2)\overline{g\circ\pi(r\zeta_1,r\zeta_2)}|J(r\zeta_1,r\zeta_2)|^2dm(\zeta_1,\zeta_2),
\end{align}
where $m$ is the standard normalized Lebesgue measure on $\bT\times\bT$, and $J(z,w)=z-w$ is the Jacobian of the map $\pi:(z,w)\mapsto (z+w,zw)$. See the papers \cite{BDSIMRN,  Tirtha-Hari-JFA,MSRZ} for some motivation for and operator theory on the spaces of analytic functions for which $\|f\|_2:=\sqrt{\langle f,f\rangle}_{H^2}<\infty$. Note here that if $f$ is an inner function on $\bG$, then $\|f\|_2=1.$
\begin{thm}\label{Main4}
Let $\mathcal{W}=Z(\xi)$ be a distinguished variety such that $\xi=\xi_1.\xi_2\dots\xi_l$ where $\xi_i$ are irrudicible polynomials with $\xi_i$ and $\xi_j$ are co-prime for each $i\neq j$ and $f$ be a regular rational inner function on $\mathbb{G}$. If for each analytic function $h(\not\equiv 0)$ on $\mathbb{G}$,
\begin{align*}
\quad 2\operatorname{Re}\langle f, \xi h\rangle_{H^2}< \|\xi h\|^2_2
\end{align*} holds, whenever $\xi h$ is bounded on $\bG$, then $\cW\cap\bG$ is a determining set for $f$.
\end{thm}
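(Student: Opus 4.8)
The plan is to argue by contradiction and collapse everything to a single inequality of $\|\cdot\|_2$-norms. Assume $g\in\bS(\bG)$ agrees with $f$ on $\cW\cap\bG$ but $g\not\equiv f$ on $\bG$. Being a regular rational inner function, $f$ is holomorphic (and bounded by $1$) on a neighbourhood of $\overline{\bG}$, so $F:=f-g$ is holomorphic on $\bG$, bounded there by $2$, not identically zero, and vanishing on $\cW\cap\bG=Z(\xi)\cap\bG$. The first step is to divide $F$ by $\xi$: I would show $F=\xi h$ for a holomorphic $h\not\equiv 0$ on $\bG$. The hypothesis $\xi=\xi_1\cdots\xi_l$ with the $\xi_i$ irreducible and pairwise coprime says exactly that $\xi$ is square-free, so $Z(\xi)$ is a reduced hypersurface and, at every point of $\bG$, the local ideal of holomorphic germs vanishing on $Z(\xi)$ is generated by $\xi$; hence $h:=F/\xi$ is holomorphic near every point, i.e.\ on all of $\bG$. (To the depth needed, one reduces to this local fact using that $\bC[s,p]$ and each local ring $\mathcal O_{\bC^2,q}$ are UFDs and that two coprime two-variable polynomials have finite common zero set, so distinct irreducible factors of $\xi$ remain distinct and reduced after passing to germs.)

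Now I would feed $h$ into the standing hypothesis. Since $\xi h=F=f-g$ is bounded on $\bG$ and $h\not\equiv 0$, the hypothesis yields $2\operatorname{Re}\langle f,\xi h\rangle_{H^2}<\|\xi h\|_2^2$. Here $\|f\|_2=1$ (as $f$ is inner) and $\|\xi h\|_2<\infty$ (since $|\xi h|\le 2$ on $\bG$ forces $(\xi h)\circ\pi\cdot J\in H^\infty(\bD^2)$), so all pairings are finite and, on the space $\{\varphi:\|\varphi\|_2<\infty\}$, the bracket of \eqref{InnProd} is a genuine inner product — it is the $L^2(\bT\times\bT)$ pairing of the boundary values of the associated $H^2(\bD^2)$ functions. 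Expanding $g=f-\xi h$,
\begin{align*}
\|g\|_2^2=\|f\|_2^2-2\operatorname{Re}\langle f,\xi h\rangle_{H^2}+\|\xi h\|_2^2>\|f\|_2^2 .
\end{align*}
On the other hand, for every $g\in\bS(\bG)$ one has $\|g\|_2\le\|f\|_2$: since $|g\circ\pi|\le 1$ on $\bD^2$ and $|f\circ\pi|=1$ a.e.\ on $\bT\times\bT$, passing to boundary values gives $\|g\|_2^2=\int_{\bT\times\bT}|g\circ\pi|^2|J|^2\,dm\le\int_{\bT\times\bT}|J|^2\,dm=\|f\|_2^2$. These two inequalities are incompatible, so in fact $g=f$ on $\bG$; hence $\cW\cap\bG$ is a determining set for $f$.

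The hard part is the factorization $F=\xi h$ with $h$ holomorphic on $\bG$; everything after it is a one-line computation. It is precisely here that the square-free / pairwise-coprime hypothesis on $\xi$ is used — without it, $F$ vanishing on $Z(\xi)\cap\bG$ would only give divisibility by the radical of $\xi$, and the norm comparison would fail. If one prefers to bypass the local UFD bookkeeping, the same step can be done sheaf-theoretically: $\bG$ is a bounded pseudoconvex (hence Stein) domain, the ideal sheaf of the reduced hypersurface $Z(\xi)\cap\bG$ equals $\xi\,\mathcal O_{\bG}$, and Cartan's Theorem B furnishes a global holomorphic $h$ with $F=\xi h$. A minor technical point left to check is that, for $g\in\bS(\bG)$, the supremum over $r$ defining $\|g\|_2^2$ is attained as the limit $r\to1^-$, which equals $\int_{\bT\times\bT}|g\circ\pi|^2|J|^2\,dm$; this is the usual monotonicity of $r\mapsto\|((g\circ\pi)J)_r\|_{L^2(\bT\times\bT)}$ for bounded holomorphic functions on $\bD^2$.
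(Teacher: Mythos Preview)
Your argument is correct and reaches the same contradiction $2\operatorname{Re}\langle f,\xi h\rangle_{H^2}=\|\xi h\|_2^2$ (equivalently $\|g\|_2^2>\|f\|_2^2\ge\|g\|_2^2$), but you arrive there by a genuinely different route from the paper.

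The paper does \emph{not} divide the general $g$ by $\xi$. Instead it first replaces $g$ by a \emph{rational inner} function $\psi$: pick $N>\dim\operatorname{Ker}M_f^*$ points on $\cW\cap\bG$, use Proposition~\ref{Main_2} to see that any Schur-class interpolant of $f$ at those nodes agrees with $f$ on all of $\cW\cap\bG$, choose an extra node $\lambda_{N+1}\in\bG\setminus\cW$ where $g\neq f$, and invoke \cite[Theorem 5.3]{DKS} to obtain a rational inner solution $\psi$ of the $(N{+}1)$-point problem. Then $f-\psi$ is \emph{rational}, vanishes on $\cW\cap\bG$, and the purely algebraic Study Lemma gives $f-\psi=\xi h$ with $h$ rational; since $\psi$ is inner one has $\|\psi\|_2=1$ exactly, and the expansion yields equality $2\operatorname{Re}\langle f,\xi h\rangle_{H^2}=\|\xi h\|_2^2$.

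What each approach buys: the paper keeps the factorization step elementary (polynomial Nullstellensatz/Study Lemma) at the price of importing the Pick-interpolation machinery of Lemma~\ref{Measure}, Proposition~\ref{Main_2} and \cite{DKS}. Your route is self-contained and shorter --- no detour through interpolation, and only the inequality $\|g\|_2\le\|f\|_2$ is needed --- but the division $F=\xi h$ for a merely holomorphic $F$ requires the analytic Nullstellensatz (equivalently, that a square-free polynomial generates a radical ideal in each local ring $\mathcal O_{\bG,q}$), which is where the pairwise-coprime hypothesis on the $\xi_i$ is spent in both proofs. Your sheaf-theoretic alternative via Cartan~B on the Stein domain $\bG$ is a clean way to package that step.
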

\begin{proof}
We shall use contrapositive argument. So suppose that there exists $g\in{\bS(\mathbb{G})}$ such that $g$ coincides with $f$ on $\cW\cap\bG$ but $g\neq f$. Choose an integer $N$ so that $\dim\operatorname{Ker}M_f^*<N$ and pick $N$ distinct points $\lambda_1,....,\lambda_N\in\cW$. Consider the $N$-point (solvable) Nevanlinna-Pick problem $\lambda_j\mapsto f(\lambda_j)$. By Proposition \ref{Main_2} all the solutions to this problem agree on $\cW\cap\bG$. Since $g\neq f$, there exists a $\lambda_{N+1}\in\mathbb{G}\setminus\cW$ such that $g(\lambda_{N+1})\neq f(\lambda_{N+1})$. Now consider the $(N+1)$-point Nevanlinna-Pick problem $\lambda_j\mapsto g(\lambda_j)$ on $\mathbb{G}$. By \cite[Theorem 5.3]{DKS}, every solvable Nevanlinna-Pick problem in $\bG$ has a rational inner solution. Let $\psi$ be a rational inner solution to the $(N+1)$-point problem $\lambda_j\mapsto g(\lambda_j)$. Since $\psi$, in particular, solves the problem $\lambda_j\mapsto f(\lambda_j)$ for each $j=1,2,\dots,N$, $\psi=f$ on $\cW\cap\bG$. But since $\psi(\lambda_{N+1})=g(\lambda_{N+1})\neq f(\lambda_{N+1})$, $\psi$ is distinct from $f$. Since $\psi=f$ on $\cW\cap\bG$, by the Study Lemma there exists a rational function $h$ such that $f-\psi=\xi h$, see \cite[chapter 1]{Alg}. Since $\psi$ is inner,
$$1=\|\psi\|_2^2=\|f-\xi h\|_2^2=\|f\|_2^2-2\operatorname{Re}\langle f, \xi h\rangle_{H^2} +\|\xi h\|_2^2.$$
Since $f$ is an inner function, $\|f\|_{2}=1$, and therefore the above computation leads to $2\operatorname{Re}\langle f, \xi h\rangle= \|\xi h\|^2_2$. This contradicts the hypothesis because $\xi h=f-\psi$ is bounded. Consequently, $g$ must coincide with $f$ on $\bG$. 
\end{proof}
One can easily find examples of distinguished varieties and regular rational inner functions such that the stringent hypothesis of the above result is satisfied.
\begin{example}
Let $f\circ\pi(z,w)=(zw)^d$ and $\cW=Z(\xi)$ be such that 
$$\xi\circ\pi(z,w)=(z^m-w^n)(z^n-w^m),$$ where $m,n$ are mutually prime integers bigger than $d$. Then it follows that $\cW$ is a distinguished variety with respect to $\bG$ because $Z(z^m-w^n)$ is a distinguished variety with respect to $\bD^2$. For concrete example, one can take $d=1$ and $(m,n)=(2,3)$ -- the corresponding distinguished variety then is the Neil parabole. Note that the inner product $\langle,\rangle$ as defined in \eqref{InnProd} can be expressed in terms of the inner product on the Hardy space of the bidisk $H^2(\bD^2)$ as 
\begin{eqnarray}\label{inner-uni}
    \langle f, \xi h\rangle_{H^2(\mathbb{G})}=\frac{1}{\|J\|^2}    \langle J(f\circ\pi), J\big((\xi\circ\pi)( h\circ\pi)\big) \rangle_{H^2(\mathbb{D}^2)}.
\end{eqnarray}
Let $h:\bG\to\bC$ be an analytic function such that $\|\xi h\|_2<\infty$. Since $\{z^iw^j: i,j\geq 0\}$ forms an orthonormal basis for $H^2(\bD^2)$, it is easy to read off from \eqref{inner-uni} that $\langle f,\xi h\rangle=0$. Therefore, by Theorem \ref{Main4}, $\cW\cap\bG$ is a determining set for $f$ as chosen above.
\end{example}

\section{A bounded extension theorem}\label{S:BddExt}
We end with a bounded extension theorem for distinguished varieties with no singularities on the distinguished boundary of $\Gamma$. Here singularity of an algebraic variety $Z(\xi)$ at a point means that both the partial derivatives of $\xi$ vanish at that point. Note that the substance of the following theorem is not that there is a rational extension of every polynomial, it is that the supremum of the rational extension over $\bG$ does not exceed the supremum of the polynomial over the variety intersected with $\bG$ multiplied by a constant that only depends on the variety. See the papers \cite{AAC, Knese-Trans, St} for similar results in other contexts.
\begin{thm}
Let $\cW$ be a distinguished variety with respect to $\mathbb{G}$ such that it has no singularities on $b{\Gamma}$. Then for every polynomial $f\in\mathbb{C}[s,p]$, there exists a rational extension $F$ of $f$ such that
$$|F(s,p)|\leq \alpha\sup_{{\cW\cap\mathbb{G}}}|f|$$
for all $(s,p)\in\mathbb{G}$, where $\alpha$ is a constant depends only on $\cW$.
\end{thm}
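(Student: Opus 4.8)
The plan is to transfer the problem to the bidisk, where a bounded extension theorem for distinguished varieties is already available, and then descend back to $\bG$ via the symmetrization map $\pi$. First I would set $\cV=Z(\xi\circ\pi)$, which (as noted in the excerpt, following \cite{AM}) is a distinguished variety with respect to $\bD^2$; moreover, since $\cW$ has no singularities on $b\bG=\pi(\bT\times\bT)$ and $\pi$ is a local biholomorphism away from the diagonal $\{z_1=z_2\}$, one checks that $\cV$ has no singularities on $\bT^2$ — the only place one must be careful is where $\cV$ meets the diagonal, but points of $\cV\cap\bT^2$ on the diagonal correspond to points of $\cW$ on $\pi(\{(\zeta,\zeta):\zeta\in\bT\})$, and a short computation with the chain rule handles the regularity there. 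With $\cV$ a distinguished variety in $\bD^2$ smooth on $\bT^2$, the Agler--McCarthy / Knese-type bounded extension theorem (see \cite{Knese-Trans}, and the results cited as \cite{AAC, St}) gives, for the symmetric polynomial $f\circ\pi\in\bC[z_1,z_2]$, a rational function $G$ on $\bD^2$ with $G|_{\cV\cap\bD^2}=f\circ\pi$ and $\sup_{\bD^2}|G|\le \beta\sup_{\cV\cap\bD^2}|f\circ\pi|$, where $\beta$ depends only on $\cV$, hence only on $\cW$.

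The next step is to symmetrize $G$ so that it descends to $\bG$. Replace $G(z_1,z_2)$ by its symmetrization $G_{\mathrm{sym}}(z_1,z_2)=\tfrac12\bigl(G(z_1,z_2)+G(z_2,z_1)\bigr)$. Because $f\circ\pi$ is already symmetric and $\cV$ is invariant under the flip $(z_1,z_2)\mapsto(z_2,z_1)$, the function $G_{\mathrm{sym}}$ still restricts to $f\circ\pi$ on $\cV\cap\bD^2$ and still satisfies $\sup_{\bD^2}|G_{\mathrm{sym}}|\le\beta\sup_{\cV\cap\bD^2}|f\circ\pi|$. A symmetric rational function of $(z_1,z_2)$ is a rational function of the elementary symmetric polynomials $s=z_1+z_2$, $p=z_1z_2$; this is the classical fact that $\bC(z_1,z_2)^{S_2}=\bC(s,p)$ (and likewise for the subring of functions regular on $\bD^2$, using that $\pi$ is proper onto $\bG$). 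Thus there is a rational $F$ on $\bG$ with $F\circ\pi=G_{\mathrm{sym}}$, and then $F|_{\cW\cap\bG}=f$ because $G_{\mathrm{sym}}|_{\cV\cap\bD^2}=f\circ\pi$ and $\pi(\cV\cap\bD^2)=\cW\cap\bG$. Finally, $\sup_{\bG}|F|=\sup_{\bD^2}|F\circ\pi|=\sup_{\bD^2}|G_{\mathrm{sym}}|\le\beta\sup_{\cV\cap\bD^2}|f\circ\pi|=\beta\sup_{\cW\cap\bG}|f|$, so the theorem holds with $\alpha=\beta$, a constant depending only on $\cW$.

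I expect the main obstacle to be the careful bookkeeping at the two ``seams'': first, verifying that the no-singularity hypothesis on $\cW\cap b\bG$ really does give a distinguished variety $\cV$ in $\bD^2$ that is smooth on $\bT^2$ — in particular ruling out that $\cV$ acquires a singularity where it crosses the diagonal, which requires expanding $\xi\circ\pi$ and tracking the vanishing of $\nabla(\xi\circ\pi)$ against $\nabla\xi$ composed with the Jacobian of $\pi$; and second, making precise the claim that the invariant subring of rational functions \emph{regular on $\overline{\bD}^2$} (or at least bounded on $\bD^2$) descends to rational functions on $\bG$ bounded on $\bG$, i.e. that the symmetrization does not introduce poles inside $\bG$ — here one uses that a symmetric function bounded on $\bD^2$ and rational in $(z_1,z_2)$, when written in $(s,p)$, is bounded on $\bG=\pi(\bD^2)$ by construction, and that properness of $\pi$ guarantees the representation $F$ is genuinely a rational function of $(s,p)$ rather than merely an algebraic one. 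Modulo these two points, everything else is a direct transfer through $\pi$ of the known bidisk result.
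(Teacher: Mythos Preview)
Your approach is essentially identical to the paper's: pull the problem back through $\pi$ to the bidisk distinguished variety $\cV$, apply Knese's bounded extension theorem (Theorem 2.20 of \cite{Knese-Trans}) to $f\circ\pi$, symmetrize the resulting rational extension by averaging over the flip, and descend the symmetric rational function to a rational $F$ on $\bG$. The paper carries out exactly these steps with the same symmetrization $H(z,w)=\tfrac12(G(z,w)+G(w,z))$, and in fact glosses over the two ``seams'' you flag---the transfer of the non-singularity hypothesis to $\cV\cap\bT^2$ and the fact that a symmetric rational function descends to a rational function of $(s,p)$---so your proposal is, if anything, more careful than the published argument.
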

\begin{proof}
Let $\cV$ be a distinguished variety with respect to $\mathbb{D}^2$ such that $\cW=\pi(\cV)$. Since $\cW$ has no singularities on $b\Gamma$, it follows that $\cV$ has no singularities on $\mathbb{T}^2$. Invoke Theorem 2.20 of \cite{Knese-Trans} to obtain a rational extension $G$ of the polynomial $f\circ\pi\in\mathbb{C}[z,w]$ such that 
\begin{eqnarray}\label{E-B}
|G(z, w)|\leq \alpha\sup_{\cV\cap\mathbb{D}^2}|f\circ\pi|
\end{eqnarray} 
for all $(z,w)\in\mathbb{D}^2$, where $\alpha$ is a constant depends only on $\cV$.
Now, define a rational function $H$ on $\mathbb{D}^2$ as follows
\begin{eqnarray}\label{Def-H}
H(z,w)= \frac{G(z,w)+G(w, z)}{2}.
\end{eqnarray}
Clearly, $H$ is also a rational extension of $f\circ\pi$ with 
$$|H(z, w)|\leq \alpha\sup_{\cV\cap\mathbb{D}^2}|f\circ\pi| \quad \text {for all } (z,w)\in\mathbb{D}^2.$$ Note that $H$ is a symmetric rational function on $\mathbb{D}^2$. So, there is a rational function $F$ on $\mathbb{G}$ such that $$H(z,w)=(F\circ\pi)(z,w)=F(z+w,zw) \quad \text {for all } (z,w)\in\mathbb{D}^2.$$
Now we will show that this $F$ will do our job. It is easy to see that $F$ is a rational extension of $f$. Let $(s, p)\in\mathbb{G}$. Then there exists a point $(z,w)\in\mathbb{D}^2$ such that $(s, p)=(z+w, zw)$. Now,
\begin{align*}
|F(s,p)|&=|(F\circ\pi)(z, w)|=|H(z,w)|
\leq \alpha\sup_{\cV\cap\mathbb{D}^2}|f\circ\pi|
=\alpha\sup_{\cW\cap\mathbb{G}}|f|.
\end{align*}
This complete the proof.
\end{proof}

\vspace{0.1in} \noindent\textbf{Acknowledgement:}
The first author is supported by the Mathematical Research Impact Centric Support (MATRICS) grant, File No: MTR/2021/000560, by the Science and Engineering Research Board (SERB), Department of Science \& Technology (DST), Government of India.  The second author was supported by the University Grants Commission Centre for Advanced Studies. The research works of the  third author is supported by DST-INSPIRE Faculty Fellowship DST/INSPIRE/04/2018/002458.

The second author thanks his supervisor Professor Tirthankar Bhattacharyya for some fruitful discussions.

\end{document}